\newcommand{\vertiii}[1]{{\left\vert\kern-0.25ex\left\vert\kern-0.25ex\left\vert #1
    \right\vert\kern-0.25ex\right\vert\kern-0.25ex\right\vert}}
\newtheorem{dfn}{Definition}
\newtheorem{lemma}{Lemma}
\newtheorem{example}{Example}
\newcommand{\R}{{\mathbb R}}
\newtheorem{theorem}{Theorem}
\newtheorem{corollary}{Corollary}
\title{On the convergence rate of  the boosted Difference-of-Convex Algorithm (DCA)}
\author{
\textbf{Hadi Abbaszadehpeivasti}
	\thanks{Tilburg University,	 h.peivasti@tilburguniversity.edu   }
	\and
	\textbf{Etienne de Klerk}
	\thanks{Tilburg University,	e.deklerk@tilburguniversity.edu
	}	
	\and
\textbf{Adrien Taylor}
	\thanks{INRIA,	adrien.taylor@inria.fr }
}
\renewcommand{\leq}{\leqslant}
\renewcommand{\geq}{\geqslant}
\begin{document}
	\maketitle
	
	\begin{abstract}
	The difference-of-convex algorithm (DCA) is a well-established nonlinear programming technique
that solves successive convex optimization problems. These sub-problems are obtained from the difference-of-convex~(DC)
decompositions of the objective and constraint functions. We investigate the worst-case performance of the unconstrained DCA,
with and without boosting, where boosting simply performs an additional step in the direction generated by the usual DCA method.
 We show that, for certain classes of DC decompositions, the boosted DCA is provably better in the worst-case than the usual DCA.
 While several numerical studies have reported that boosted DCA outperforms classical DCA, a theoretical explanation
 for this behavior has, to the best of our knowledge, not been given until now.
 Our proof technique relies on semidefinite programming (SDP) performance estimation.

		\keywords{Boosted DCA, semidefinite programming, performance estimation, difference of convex functions}
	\end{abstract}
	\section{Introduction}
We consider the unconstrained difference-of-convex (DC) optimization problem
	\begin{align}\label{P}
\  \ f^\star := &\inf f(x)\\
\nonumber & \ \text{s.t.} \ x\in \mathbb{R}^n
\end{align}
where $f: \mathbb{R}^n\to \mathbb{R}$ is a \emph{difference-of-convex~(DC)} function, i.e.\
$
f=f_1-f_2,
$
where
 $f_1$ and $f_2$ are closed convex proper functions. We also assume that the infimum $f^\star$ is finite, but not necessarily attained.
We will further restrict ourselves to the case where~$f_1$ and~$f_2$ are so-called functions of bounded curvature. To this end, recall that
a  function~$f$ has a \em{maximum curvature} $0 \le L < \infty$ if
 $
 x \mapsto \tfrac{L}{2}\| x\|^2-f(x)$  is convex,
and   \em{minimum curvature}
 $\mu > -\infty$ if
 $
 x \mapsto f(x)-\tfrac{\mu}{2}\| x\|^2$ { is convex.}
 The class of functions with minimum curvature $\mu$ and maximum curvature $L$ will be denoted by $\mathcal{F}_{\mu,L}(\mathbb{R}^n)$.

The celebrated difference-of-convex (DCA) algorithm for problem \eqref{P} may be stated as follows.

\begin{algorithm}[H]
\caption{DCA}
\begin{algorithmic}\label{Alg1}
\STATE Pick $x^1\in\mathbb{R}^n$ (starting point), $N \in \mathbb{N}$ (number of iterations).
\STATE For $k=1, 2, \ldots, N$ perform the following steps:\\
\begin{enumerate}
\item Pick  $g_2^{k}\in\partial f_2(x^{k})$.
\item
Set
\begin{align*}
  x^{k+1}\in \text{argmin}_{x\in\mathbb{R}^n} f_1(x)-\left(f_2(x^k)+\langle g_2^k, x-x^k\rangle\right).
\end{align*}
\end{enumerate}
\end{algorithmic}
\end{algorithm}
In the statement of Algorithm \ref{Alg1}, $\partial f_i(x^k)$ ($i=1,2$) refers to the set of sub-gradients of $f_i$ at $x^k$.
{ In this paper, we will only present new results for the case where $f_1$ and $f_2$ both have finite maximum curvature, which implies that both functions are differentiable and $L$-smooth. However, we will develop the framework of our analysis in the general setting, and therefore will refer to sub-gradients when we do not assume smoothness.}

The first convergence results for Algorithm \ref{Alg1} are attributed to \cite[Theorem 3(iv)]{tao1997convex}, namely if the sequence of iterates $\{x^k\}$ is bounded, then each accumulation point $x^\star$ of this sequence  is a critical
 point of problem \eqref{P}, i.e.\ $\partial f_1(x^\star)\cap  \partial f_2(x^\star) \neq \emptyset$.
For the history and properties of the DCA we refer to the surveys by  Le Thi and Dinh \cite{le2018dc,le2024survey} and Lipp and Boyd \cite{Boyde}.

A more recent variant of the DCA, the so-called boosted DCA \cite{Fukushima1981,Boosted_DCA_smooth_MP,Boosted_DCA_nonsmooth_SIOPT}, extends the DCA step as follows.

\begin{algorithm}[H]
\caption{Boosted DCA}
\begin{algorithmic}\label{Alg:BCDA}
\STATE Pick $x^1\in\mathbb{R}^n$ (starting point), $N \in \mathbb{N}$ (number of iterations), and $\alpha \in [0,1]$ (boosted step length).
\STATE For $k=1, 2, \ldots, N$ perform the following steps:\\
\begin{enumerate}
\item Pick  $g_2^{k}\in\partial f_2(x^{k})$.
\item
Set
\begin{align*}
  y^{k}\in \text{argmin}_{x\in\mathbb{R}^n} f_1(x)-\left(f_2(x^k)+\langle g_2^k, x-x^k\rangle\right).
\end{align*}
\item
Set $d^k = y^k - x^k$, and
\begin{align*}
  x^{k+1} = y^k + \alpha d^k.
\end{align*}
\end{enumerate}
\end{algorithmic}
\end{algorithm}
Note that the case $\alpha = 0$ corresponds to the usual DCA. In both Algorithms \ref{Alg1} and \ref{Alg:BCDA} we assume that { a subgradient of $f_2$ is} available at each iteration. The additional computational burden per iteration for boosted DCA is usually negligible compared to that of solving the intermediary subproblems and that of computing subgradients. It is therefore natural to compare the worst-case performance of the two methods on the same number of iterations.

The boosted DCA is motivated by the following lemma.
\begin{lemma}[Proposition 4 in \cite{Boosted_DCA_smooth_MP}]
Assume that $f$ is continuously differentiable, and  $f_1$ and $f_2$ have minimum curvature $\mu > 0$. Then, with reference to Algorithm \ref{Alg:BCDA}, one has
\[
\langle \nabla f(y^k), d^k\rangle \le - \mu \|d^k\|^2.
\]
\end{lemma}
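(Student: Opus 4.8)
My plan is to combine the first-order optimality condition that defines $y^k$ with the strong monotonicity of the subdifferential of $f_2$ implied by its minimum curvature $\mu>0$. The only delicate point is a subdifferential calculus step, and this is precisely where the hypothesis that $f$ is continuously differentiable enters.

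First I would extract the optimality condition for $y^k$. Since $y^k$ minimizes the convex function $x\mapsto f_1(x)-f_2(x^k)-\langle g_2^k,x-x^k\rangle$, Fermat's rule gives $g_2^k\in\partial f_1(y^k)$. Next I would transfer this into a subgradient of $f_2$ at $y^k$. Writing $f_1=f_2+f$ and using that $f$ is continuously differentiable, the exact sum rule — which holds here without any constraint qualification precisely because one summand is smooth — yields $\partial f_1(y^k)=\partial f_2(y^k)+\nabla f(y^k)$, and hence $g_2^k-\nabla f(y^k)\in\partial f_2(y^k)$. I would stress that one cannot apply a naive subdifferential rule to the DC function $f=f_1-f_2$ directly, since $f$ is not convex; the point is to keep $f_1$ and $f_2$ convex and to absorb the smooth part $f$ into a sum of a convex and a differentiable function.

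Finally I would invoke strong monotonicity. Because $f_2$ has minimum curvature $\mu$, the function $f_2-\tfrac{\mu}{2}\|\cdot\|^2$ is convex, so its subdifferential is monotone; this translates into $\langle u-v,\,a-b\rangle\ge\mu\|a-b\|^2$ for all $u\in\partial f_2(a)$ and $v\in\partial f_2(b)$. Applying this with the two subgradients $g_2^k\in\partial f_2(x^k)$ and $g_2^k-\nabla f(y^k)\in\partial f_2(y^k)$ at the points $a=x^k$ and $b=y^k$, the $g_2^k$ terms cancel and leave $\langle\nabla f(y^k),\,x^k-y^k\rangle\ge\mu\|x^k-y^k\|^2$. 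Since $d^k=y^k-x^k$, this is exactly $\langle\nabla f(y^k),d^k\rangle\le-\mu\|d^k\|^2$, as claimed.

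The main obstacle is the second step: justifying the identity $\partial f_1(y^k)=\partial f_2(y^k)+\nabla f(y^k)$ cleanly, as the rest is then a short cancellation. I expect everything else to be routine. I would also remark that only the minimum curvature of $f_2$ is actually needed for the stated inequality, even though the lemma assumes minimum curvature $\mu$ for both $f_1$ and $f_2$.
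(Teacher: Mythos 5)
Your proof is correct, but note that the paper itself does not prove this lemma at all: it is imported verbatim (as Proposition 4 of the cited reference of Arag\'on Artacho, Fleming and Vuong) and used only as motivation for the boosted step, so there is no in-paper proof to compare against. Your argument essentially reconstructs, and slightly generalizes, the proof from that reference: there both $f_1$ and $f_2$ are smooth, so the optimality condition reads $\nabla f_1(y^k)=\nabla f_2(x^k)$ and the identity $\nabla f(y^k)=\nabla f_1(y^k)-\nabla f_2(y^k)$ is immediate, after which strong monotonicity of $\nabla f_2$ gives the inequality exactly as in your cancellation step. Your version replaces this by Fermat's rule $g_2^k\in\partial f_1(y^k)$ plus a subdifferential sum rule, which is what lets the argument survive when $f_1,f_2$ are merely convex and only $f=f_1-f_2$ is smooth.

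One point deserves more care than your appeal to ``the exact sum rule, which holds because one summand is smooth.'' The textbook sum rule $\partial(g+h)=\partial g+\nabla h$ is stated for $g$ convex and $h$ \emph{convex} differentiable; in your decomposition $f_1=f_2+f$ the smooth summand $f$ is nonconvex, so that statement does not literally apply, and a naive pointwise manipulation of subgradient inequalities proves neither inclusion (it would require $f$ convex for one direction and $f$ concave for the other). The clean justification of the only inclusion you actually need, $\partial f_1(y^k)-\nabla f(y^k)\subseteq\partial f_2(y^k)$, is via directional derivatives: since $f_1$, $f_2$ have directional derivatives (convexity) and $f$ is differentiable, $f_2'(y^k;d)=f_1'(y^k;d)-\langle\nabla f(y^k),d\rangle$ for every $d$, so any $w\in\partial f_1(y^k)$ satisfies $\langle w-\nabla f(y^k),d\rangle\le f_2'(y^k;d)\le f_2(y^k+d)-f_2(y^k)$, i.e.\ $w-\nabla f(y^k)\in\partial f_2(y^k)$. (Equivalently, one can invoke Clarke's calculus for locally Lipschitz functions with one strictly differentiable summand.) With that step made precise, the rest of your proof, including your correct observation that only the minimum curvature of $f_2$ is used, is sound.
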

In other words, if $f$ is smooth, and $f_1,f_2$ strongly convex, it is possible to reduce $f$ further along the DCA direction~$d^k$, i.e.\
$d^k$ is a descent direction at $y^k$.
This work investigates how different performance measures depend on the choice of ``boosted" step length $\alpha$.
Our goal is to give a theoretical explanation of the advantage that one may obtain using boosted DCA as opposed to DCA.
This, in turn, is motivated by successful numerical experiments in \cite{Boosted_DCA_smooth_MP,Zhang_et_al_2024_JOTA},
which show that boosted DCA typically outperforms DCA in practice.

{
In the literature, BDCA often involves some sort of line search.
We will primarily deal with the case that the boosted step length $\alpha$ is a fixed constant, but we will also show how our analysis may be adapted to deal with some backtracking line search variants}.

\subsection*{Contributions of this paper}
This work shows that boosted DCA enjoys the following worst-case convergence guarantee.

\begin{theorem}
\label{thm:main}
Let $f_1\in\mathcal{F}_{\mu, L}({\mathbb{R}^n})$ and $f_2\in\mathcal{F}_{\mu, L}({\mathbb{R}^n})$, $0 \le \mu < L < \infty$. After $N$ iterations of boosted DCA (Algorithm \ref{Alg:BCDA}), one has,
 \[
 { \min_{1\leq k\leq N+1}\frac{\left\|\nabla f(x^k)\right\|^2}{L} \equiv}
    \min_{1\leq k\leq N+1}\frac{\left\|g_1^k-g_2^k\right\|^2}{L}\leq \frac{(f(x^1)-f^\star) }{(1+\kappa\alpha)N + \frac{1}{2(1-\kappa)}},
\]
where $\kappa := \mu/L \in [0,1)$, and where $0 \le \alpha \le \min \{1,2\kappa\}$.
\end{theorem}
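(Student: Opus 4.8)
The plan is to establish the bound via the semidefinite performance-estimation (PEP) methodology: cast the worst-case ratio as an optimization problem over admissible pairs $f_1,f_2\in\mathcal F_{\mu,L}(\mathbb R^n)$ and iterate sequences, and then certify the claimed upper bound by exhibiting an explicit nonnegative combination of valid inequalities (a weak-duality certificate for the associated SDP). Concretely, I would collect the evaluation points generated by $N$ iterations of the boosted DCA (Algorithm~\ref{Alg:BCDA})---namely $x^1,\dots,x^{N+1}$ together with the auxiliary points $y^1,\dots,y^N$---and record the linear relations tying them together: the boosting update $x^{k+1}=(1+\alpha)y^k-\alpha x^k$, and the first-order optimality of the DCA subproblem, which furnishes a subgradient $g_1^{y^k}\in\partial f_1(y^k)$ with $g_1^{y^k}=g_2^k$. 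The performance measure is the smallest stationarity residual $\|r^k\|^2$, where $r^k:=g_1^k-g_2^k$ with $g_1^k\in\partial f_1(x^k)$ and $g_2^k\in\partial f_2(x^k)$.

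First I would write down the interpolation inequalities that exactly characterize membership in $\mathcal F_{\mu,L}$ (the smooth strongly convex interpolation conditions), applied to $f_1$ and to $f_2$ on the pairs of points appearing within a single iteration: $x^k$ versus $y^k$ (using $g_1^{y^k}=g_2^k$), and $y^k$ versus $x^{k+1}$ (to exploit the boosting geometry $x^{k+1}=y^k+\alpha d^k$). Taking a suitable nonnegative linear combination of these inequalities and substituting the two equality relations above, the goal is to produce a clean per-iteration descent estimate of the form
\[
f(x^k)-f(x^{k+1})\ \ge\ \frac{1+\kappa\alpha}{L}\,\|r^{k+1}\|^2 \;+\;(\text{a telescoping boundary term}),
\]
in which boosting contributes the factor $1+\kappa\alpha$ relative to the plain-DCA estimate recovered at $\alpha=0$.

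Then I would sum this estimate over $k=1,\dots,N$. The function values telescope to $f(x^1)-f(x^{N+1})\le f(x^1)-f^\star$, each of the $N$ residual terms dominates $\tfrac{1+\kappa\alpha}{L}\min_{1\le k\le N+1}\|r^k\|^2$, and the non-telescoping endpoint contribution supplies the $N$-independent additive constant $\tfrac{1}{2(1-\kappa)}$ (the same constant already present in the plain DCA analysis, now inherited unchanged). Factoring out $\min_{1\le k\le N+1}\|r^k\|^2/L$ and rearranging yields the stated bound.

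The hard part is the middle step: guessing and then verifying the correct dual multipliers. I expect to read these off numerically from the SDP for small $N$, recognize a closed form, and then prove for general $N$ that the residual left after combining the interpolation inequalities---a quadratic form in the subgradients and the point differences---is nonnegative. The condition $0\le\alpha\le\min\{1,2\kappa\}$ should be exactly what keeps all multipliers nonnegative and this residual form positive semidefinite; a naive descent-lemma argument (using the $(L-\mu)$-smoothness of $f$ together with the motivating lemma $\langle\nabla f(y^k),d^k\rangle\le-\mu\|d^k\|^2$) yields only the weaker threshold $\alpha\le 2\mu/(L-\mu)$ and a looser constant, so the sharp statement genuinely requires keeping every cross term inside the SDP rather than discarding it.
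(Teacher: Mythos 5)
Your proposal follows the paper's route exactly: formulate the PEP, certify the bound by a nonnegative combination of $\mathcal{F}_{\mu,L}$-interpolation inequalities with multipliers guessed from the dual SDP, reduce everything to a one-iteration descent estimate, telescope over $k=1,\dots,N$, and handle the endpoint so that the $N$-independent constant $\tfrac{1}{2(1-\kappa)}$ appears. All of this matches Parts 1 and 2 of the paper's proof; in particular, your per-iteration estimate is, up to moving a term $\tfrac{1}{2L}\bigl(\|g_1(x^k)-g_2(x^k)\|^2-\|g_1(x^{k+1})-g_2(x^{k+1})\|^2\bigr)$ into the ``telescoping boundary term,'' the same as the paper's inequality
\[
f(x^{k+1})+\tfrac{1}{2L}\|g_1(x^{k+1})-g_2(x^{k+1})\|^2+\tfrac{1}{L}\bigl(\tfrac12+\alpha\tfrac{\mu}{L}\bigr)\|g_1(x^k)-g_2(x^k)\|^2\leq f(x^k),
\]
and your endpoint constant is indeed obtained there by invoking the descent lemma $f^\star\leq f(x^{N+1})-\tfrac{1}{2(L-\mu)}\|g_1(x^{N+1})-g_2(x^{N+1})\|^2$ rather than the crude bound $f(x^{N+1})\geq f^\star$.

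The genuine gap is that the proposal never produces the certificate; it explicitly defers ``guessing and verifying the dual multipliers'' to a computation to be done later. That step is not an implementation detail: it is the entire mathematical content of the theorem, and it is where the hypothesis $\alpha\leq\min\{1,2\kappa\}$ actually arises. In the paper's Part 3, five interpolation inequalities are combined --- four on $f_1$, at the ordered pairs $(x^k,y^k)$, $(y^k,x^k)$, $(x^{k+1},y^k)$, $(y^k,x^{k+1})$ with multipliers $1+\tfrac{2\alpha\mu}{L}$, $\tfrac{2\alpha\mu}{L}$, $\tfrac{1}{\alpha}-1$, $\tfrac{1}{\alpha}$, and one on $f_2$ at $(x^{k+1},x^k)$ with multiplier $1$ --- and the aggregated residual is exhibited as three explicit squared norms; nonnegativity of the multiplier $\tfrac1\alpha-1$ forces $\alpha\leq 1$, while nonnegativity of the squares' coefficients forces $\alpha\leq 2\kappa$. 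Note also that your anticipated constraint set (the pairs $x^k$ vs.\ $y^k$ and $y^k$ vs.\ $x^{k+1}$, for both $f_1$ and $f_2$) does not match the certificate that works: the paper's only $f_2$ inequality is at the pair $(x^{k+1},x^k)$, which you omit, and it is the sole channel through which the curvature of $f_2$ enters. So as written the plan is a faithful description of how the paper's proof was \emph{discovered}, but it proves nothing by itself, and its specifics (which inequalities to aggregate) would need correction in the course of carrying it out.
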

When $\alpha = 0$, this result reduces to a known (tight) convergence result for DCA, namely a special case of Theorem 3.1 in \cite{abbaszad2024DCA}. As soon as $\kappa > 0$, Theorem~\ref{thm:main} shows that improved performance guarantees can be obtained with $\alpha > 0$. { On the other hand, when $\mu = 0$, one has $\kappa = 0$, and Theorem \ref{thm:main} does not offer any improvement for positive $\alpha$. Thus, we only show improvement from the boosted step if $f_1$ and $f_2$ are strongly convex.}

{ 
The bound in Theorem~\ref{thm:main} can be tight, as the following example shows.

\begin{example}
\label{ex:tight}
    We will construct a univariate, piecewise quadratic function where the bound in Theorem~\ref{thm:main} holds with equality for the parameter choices:
\[
\mu = 1, \; L = 2, \; \kappa = \frac{1}{2}, \; \alpha = 1, \mbox{ and } N = 1.
\]
To this end, define the points
\[
x^1 = 0, \; y^1 = \frac{-1}{\sqrt{5}}, \;
x^2 = \frac{-2}{\sqrt{5}}, \; x_\star  =\frac{-4}{\sqrt{5}}. 
\]
We will construct functions $f_1 \in\mathcal{F}_{\mu, L}({\mathbb{R}})$ and $f_2 \in \mathcal{F}_{\mu, L}({\mathbb{R}})$ such that $x_\star$ is the minimizer of $f = f_1 - f_2$ on $\R$, $f(x_\star) = 0$, and the values $x^2$ and $y^1$ are generated by Algorithm \ref{Alg:BCDA} from the starting point $x^1$, where $f(x^1) = 1$. 
Define the piecewise quadratic function,
\[
f_1(x):=
\begin{cases}
\frac{1}{2}x^2 - \frac{1}{\sqrt{5}}x - \frac{12}{5}, & x\le x^\star,\\[1mm]
x^2 + \frac{3}{\sqrt{5}}x - \frac{4}{5}, & x^\star\le x\le x^{2},\\[1mm]
\frac{1}{2}x^2 + \frac{1}{\sqrt{5}}x - \frac{6}{5}, & x^{2}\le x\le y^1,\ \ \\[1mm]
x^2 + \frac{2}{\sqrt{5}}x - \frac{11}{10}, & y^1\le x,\ \ \\
\end{cases}
\]
and similarly let
\[
f_2(x):=
\begin{cases}
\frac{1}{2}x^2 - \frac{1}{\sqrt{5}}x - \frac{12}{5}, &  x\le x^{2},\\[1mm]
x^2 + \frac{1}{\sqrt{5}}x - 2, & x^{2}\le x\le y^1,\ \ \\[1mm]
\frac{1}{2}x^2  - \frac{21}{10}, & y^1\le x.\ \ \\
\end{cases}
\]
One may readily verify that $f_1$ and $f_2$ are continuously differentiable. Both derivatives are monotonically increasing (piecewise linear) functions, and therefore $f_1$ and $f_2$ are convex.
By the definition of curvature, it is also clear that both functions have minimum curvature $\mu =1$, and maximum curvature $L=2$, as required. Note also that $f(x^1) = f(0) = f_1(0) - f_2(0) = 1$, and that the minimum is $f(x_\star) = 0$.

We now perform one iteration ($N=1$) of Algorithm \ref{Alg:BCDA} on $f = f_1 - f_2$, starting from $x^1 =0$. Since $g_2^1 = \nabla f_2(x^1) = 0$, step 2 of the algorithm reduces to
\[
y^1 = \arg\min f_1(x) = -\frac{1}{\sqrt{5}},
\]
as required.
Thus the boosted step with $\alpha = 1$ yields, $x^2 = -\frac{2}{\sqrt{5}}$, as required.

It remains to verify that the bound in Theorem~\ref{thm:main} holds with equality for this example. To this end, note that 
\[
\min_{1\leq k\leq N+1}\frac{\left\|\nabla f(x^k)\right\|^2}{L} = \frac{1}{2}\min \left\{\frac{4}{5}, \frac{4}{5}\right\} = \frac{2}{5} = \frac{(f(x^1)-f^\star) }{(1+\kappa\alpha)N + \frac{1}{2(1-\kappa)}},
\]
as required.
\end{example}
}

\subsection*{Outline of this paper}
Theorem~\ref{thm:main} is proved below in Section \ref{sec:pep results} using the semidefinite programming (SDP) performance estimation approach, as pioneered by Drori and Teboulle \cite{drori2014performance}. More precisely, we use the refined version from~\cite{taylor2017smooth, Taylor} to allow for tight analysis. Thus, we will first review the performance estimation method in some detail in Section \ref{sec:PEP}.
{ In Section \ref{sec:backtracking} we show how one may use the analysis from Section \ref{sec:pep results} to analyze a variant of boosted DCA with  backtracking, as opposed to a fixed step length $\alpha > 0$. The advantage of the backtracking scheme is that one does not need to know the curvature parameters to select the step length.}
One of the attractive features of the DCA is that it contains some well-known algorithms as special cases, like gradient descent, and proximal gradient descent; see e.g.\ \cite{rotaru2025tight}. In Section \ref{sec:GD}, we investigate the implications of our analysis for the boosted DCA in the special case of gradient descent. In particular, if the objective satisfies a quadratic \L{}ojasiewicz condition (also often referred to as Polyak-\L{}ojasiewicz), we present results on the rate of linear convergence. Section~\ref{sec:conclusion} concludes with open questions and a further discussion of related literature.

\section{Performance estimation}
\label{sec:PEP}

A key step to the desired convergence results consists in formulating an abstract optimization problem that aims to find the worst-case input $ f_1 \in \mathcal{F}_{\mu_1,L_1}(\mathbb{R}^n)$ and
$f_2 \in \mathcal{F}_{\mu_2,L_2}(\mathbb{R}^n)$ for Algorithm \ref{Alg:BCDA}, where we view $\mu_1 \ge 0,\mu_2\ge 0,L_1 \in [\mu_1,\infty],L_2 \in [\mu_2,\infty]$ as given parameters:
\begin{equation}
\begin{aligned}
   \max & \ \left(\min_{1\leq k\leq N+1} \left\|g_1^k-g_2^k\right\|^2\right)\\  & \ f_1\in \mathcal{F}_{\mu_1,L_1}(\mathbb{R}^n), f_2\in \mathcal{F}_{\mu_2,L_2}(\mathbb{R}^n)\\
& f_1(x)-f_2(x)\geq f^\star  \ \ \ \forall x\in\mathbb{R}^n\\
  &  f_1(x^1)-f_2(x^1)-f^\star\leq \Delta\\
&  \  g_1^{N+1}, g_2^{N+1},  x^{N+1}, \ldots, x^2 \ \textrm{are generated  by Algorithm \ref{Alg:BCDA}  w.r.t.}\ f_1, f_2, x^1  \\
  & \ x^1\in\mathbb{R}^n,
\end{aligned}\label{eq:base_PEP}
\end{equation}
where $f_1, f_2$ and $x^k, g_1^k, g_2^k$ ($k\in\{1, ..., N+1\}$) are decision variables, and $\Delta,\mu_1,L_1,\mu_2,L_2, N$ are fixed parameters.

At first glance, this problem seems intractable, since we need to optimize over the infinite-dimensional function spaces
$\mathcal{F}_{\mu_1,L_1}(\mathbb{R}^n)$ and $\mathcal{F}_{\mu_2,L_2}(\mathbb{R}^n)$. However, since we only reference $f_1$ and $f_2$ (and their subgradients) at the $N$ iterates, we may reformulate these membership conditions as tractable interpolation conditions as in~\cite{taylor2017smooth}.

To this end, consider a finite index set $I$ (that will correspond to iterates in our analysis), and given a set of triplets $\left\{(x^k,g^k,f^k) \right\}_{k\in I}$ where  $x^k \in \mathbb{R}^n$,
 $g^k\in \mathbb{R}^n$ and   $f^k\in \mathbb{R}$.
 We are concerned with the following interpolation/extension result: for given $\mu \ge 0$ and $L \ge \mu$, does there exist a $f\in\mathcal{F}_{\mu, L}(\mathbb{R}^n)$ such that
$f(x^k)=f^k$, and $g^k\in\partial f(x^k)$ for all $k\in I$. If yes, we say that $\left\{(x^k,g^k,f^k) \right\}_{k\in I}$  is $\mathcal{F}_{\mu, L}(\mathbb{R}^n)$-interpolable.

\begin{theorem}(\cite{taylor2017smooth}, Theorem 4)
Assume $\mu \ge 0$ and $L \in [\mu,\infty]$.
The following statements are equivalent:
\begin{enumerate}
\item
$\left\{(x^i,g^i,f^i) \right\}_{i\in I}$  is $\mathcal{F}_{\mu, L}(\mathbb{R}^n)$-interpolable;
\item
 $\forall i,j\in I$:
\begin{equation}
\label{fmuLinterpolation}
\frac{1}{2(1-\frac{\mu}{L})}\left(\frac{1}{L}\left\|g^i-g^j\right\|^2+\mu\left\|x^i-x^j\right\|^2-\frac{2\mu}{L}\left\langle g^j-g^i,x^j-x^i\right\rangle\right)\leq
f^i-f^j-\left\langle g^j, x^i-x^j\right\rangle.
\end{equation}
\end{enumerate}
\end{theorem}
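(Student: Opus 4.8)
The plan is to recognize that, for $0 \le \mu \le L < \infty$, the class $\mathcal{F}_{\mu,L}(\mathbb{R}^n)$ coincides with the set of $L$-smooth $\mu$-strongly convex functions (bounded maximum curvature plus convexity forces differentiability with Lipschitz gradient), so that condition~\eqref{fmuLinterpolation} is precisely the smooth strongly convex interpolation inequality of~\cite{taylor2017smooth}. I would establish the two implications separately, treating necessity as a direct computation and sufficiency as the substantive, constructive part.

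For necessity (statement~1 implies statement~2), assume some $f\in\mathcal{F}_{\mu,L}(\mathbb{R}^n)$ interpolates the data. I would introduce the shifted function $h(x) := f(x)-\tfrac{\mu}{2}\|x\|^2$, which by the curvature definitions is convex and $(L-\mu)$-smooth, with gradients $\tilde g^k = g^k-\mu x^k$ and values $\tilde h^k = f^k-\tfrac{\mu}{2}\|x^k\|^2$ at the sampled points. I would then invoke the standard inequality valid for any $M$-smooth convex function,
\[
h(x^i)-h(x^j)-\langle \nabla h(x^j),x^i-x^j\rangle \ge \tfrac{1}{2M}\|\nabla h(x^i)-\nabla h(x^j)\|^2,
\]
with $M=L-\mu$, substitute the expressions for $\tilde g^k$ and $\tilde h^k$, expand the squared norm, and collect the quadratic, cross, and gradient terms; this recovers exactly~\eqref{fmuLinterpolation}.

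For sufficiency (statement~2 implies statement~1) I would build an interpolant through a chain of interpolability-preserving transformations. First, subtracting the quadratic $\tfrac{\mu}{2}\|\cdot\|^2$ reduces $\mathcal{F}_{\mu,L}$-interpolability of $\{(x^i,g^i,f^i)\}$ to $\mathcal{F}_{0,L-\mu}$-interpolability of the shifted data $\{(x^i,\tilde g^i,\tilde h^i)\}$, and one checks that~\eqref{fmuLinterpolation} is exactly the corresponding smooth-convex condition for the shifted data. Next I would apply Fenchel conjugation, using that the conjugate of an $M$-smooth convex function is $\tfrac{1}{M}$-strongly convex: this maps $\mathcal{F}_{0,M}$-interpolability of $\{(x^i,\tilde g^i,\tilde h^i)\}$ to $\mathcal{F}_{1/M,\infty}$-interpolability of the dual data $\{(\tilde g^i,x^i,\langle \tilde g^i,x^i\rangle-\tilde h^i)\}$, in which subgradient and primal variable swap roles. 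A further quadratic shift reduces the strongly convex case to the plain convex case $\mathcal{F}_{0,\infty}$.

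The base case is the classical result that a data set is $\mathcal{F}_{0,\infty}$-interpolable iff $f^i \ge f^j+\langle g^j,x^i-x^j\rangle$ for all $i,j$, in which case an explicit interpolant is the supremum of supporting affine functions $x\mapsto \max_k\{f^k+\langle g^k,x-x^k\rangle\}$. Reversing the transformation chain then yields a function in $\mathcal{F}_{\mu,L}(\mathbb{R}^n)$ interpolating the original data. The main obstacle is ensuring that each transformation is an exact bijection at the level of interpolation data --- in particular that the Fenchel conjugate of the constructed convex interpolant has the prescribed gradients and values at the dual points and exactly the dual smoothness/strong-convexity constant --- and that the reconstructed function satisfies the curvature bounds \emph{globally} rather than only at the sampled points. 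Verifying these equivalences, and tracking that~\eqref{fmuLinterpolation} is transported into the plain convexity inequality at the end of the chain, is where the care is needed.
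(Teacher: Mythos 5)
The paper does not actually prove this statement---it imports it verbatim as Theorem 4 of \cite{taylor2017smooth}---and your proposal reconstructs essentially the proof given in that reference: necessity via the standard smooth-convex inequality applied to the shifted function $f-\tfrac{\mu}{2}\|\cdot\|^2$, and sufficiency by reducing to plain convex interpolation through the quadratic-shift and Fenchel-conjugation equivalences, with the piecewise-affine maximum as the base interpolant. Your approach is therefore essentially the same as the cited source's, and the points you flag as needing care (exactness of each transformation at the level of interpolation data, and global validity of the curvature bounds for the reconstructed function) are precisely the lemmas established there.
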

Using this theorem, as well as the following lemma, we may reformulate~\eqref{eq:base_PEP} as an~SDP.

\begin{lemma}[\cite{abbaszad2024DCA}, Lemma 2.1 (\emph{Descent lemma})]\label{Lem_de}
 Let $f_1\in\mathcal{F}_{\mu_1, L_1}({\mathbb{R}^n})$ and $f_2\in\mathcal{F}_{\mu_2, L_2}({\mathbb{R}^n})$ and let $f=f_1-f_2$. If $g_1\in\partial f_1(x)$ and $g_2\in\partial f_2(x)$, then
 $$
 f^\star\leq f(x)-\tfrac{1}{2\left(L_1-\mu_2\right)}\|g_1-g_2\|^2.
 $$
\end{lemma}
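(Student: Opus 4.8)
The plan is to bound $f^\star$ from above by exhibiting an explicit point $y$ at which the value of $f$ is small, and then to invoke $f^\star=\inf_{z}f(z)\le f(y)$. The crucial ingredient is a global quadratic upper model for $f$ centered at the point $x$ at which the subgradients $g_1,g_2$ are evaluated.

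First I would show that the difference $f=f_1-f_2$ has maximum curvature $L_1-\mu_2$. By definition of the curvature classes, both $z\mapsto \tfrac{L_1}{2}\|z\|^2-f_1(z)$ and $z\mapsto f_2(z)-\tfrac{\mu_2}{2}\|z\|^2$ are convex, hence so is their sum
\[
\frac{L_1-\mu_2}{2}\|z\|^2-f(z)=\Big(\frac{L_1}{2}\|z\|^2-f_1(z)\Big)+\Big(f_2(z)-\frac{\mu_2}{2}\|z\|^2\Big).
\]
Moreover $g_1\in\partial f_1(x)$ yields $L_1x-g_1\in\partial\big(\tfrac{L_1}{2}\|\cdot\|^2-f_1\big)(x)$ and $g_2\in\partial f_2(x)$ yields $g_2-\mu_2x\in\partial\big(f_2-\tfrac{\mu_2}{2}\|\cdot\|^2\big)(x)$, so by additivity of the subdifferential the sum $(L_1-\mu_2)x-(g_1-g_2)$ is a subgradient at $x$ of the convex function above.

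Next I would convert this into a descent-type inequality. Writing the subgradient inequality for $\tfrac{L_1-\mu_2}{2}\|\cdot\|^2-f$ at $x$ tested against an arbitrary $y$, and simplifying the quadratic terms via $\|y\|^2-\|x\|^2=\|y-x\|^2+2\langle x,y-x\rangle$, the cross terms cancel and one is left with
\[
f(y)\le f(x)+\langle g_1-g_2,\,y-x\rangle+\frac{L_1-\mu_2}{2}\|y-x\|^2\qquad\forall\,y\in\mathbb{R}^n.
\]
Since $L_1-\mu_2>0$, the right-hand side is a strictly convex quadratic in $y$, minimized at $y=x-\tfrac{1}{L_1-\mu_2}(g_1-g_2)$, where it equals $f(x)-\tfrac{1}{2(L_1-\mu_2)}\|g_1-g_2\|^2$. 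Combining with $f^\star\le f(y)$ delivers the claim.

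The only points requiring care are nonsmoothness-related: one must justify the additivity $\partial(\varphi+\psi)=\partial\varphi+\partial\psi$ for the convex functions at hand (which holds here because each decomposition contains the everywhere-finite continuous quadratic term), and one must retain the standing hypothesis $L_1-\mu_2>0$ so that the quadratic upper model is bounded below and the stated constant is finite. In the setting of Theorem~\ref{thm:main}, where $L_1=L_2=L>\mu=\mu_1=\mu_2$, this is automatic. I do not expect any genuine obstacle beyond this bookkeeping: the whole argument is a single-center majorization of $f$ followed by exact minimization of the resulting quadratic.
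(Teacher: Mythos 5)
Your proof is correct, and it is essentially the standard argument: the paper itself states Lemma~\ref{Lem_de} without proof (importing it from \cite{abbaszad2024DCA}), and the proof there likewise combines the curvature hypotheses into the quadratic majorization $f(y)\le f(x)+\langle g_1-g_2,\,y-x\rangle+\tfrac{L_1-\mu_2}{2}\|y-x\|^2$ and minimizes it exactly at $y=x-\tfrac{1}{L_1-\mu_2}(g_1-g_2)$. Your packaging of this via the single convex function $\tfrac{L_1-\mu_2}{2}\|\cdot\|^2-f$ and its subgradient, rather than adding the separate one-sided quadratic bounds for $f_1$ and $f_2$, is only a cosmetic difference, and your appeal to the subdifferential sum rule is sound since each decomposition involves an everywhere-finite smooth quadratic summand.
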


To reformulate the performance estimation problem, we need some more notation to accommodate the two different types of points $x^k$ and $y^k$ generated by Algorithm \ref{Alg:BCDA}.
In particular, we introduce the vector variable $g_1(x^k)$ to correspond to a subgradient of $f_1$ at $x^k$, $g_2(y^k)$ that corresponds to a subgradient of $f_2$ at $y^k$, etc. Similarly, we introduce scalar variables $f_1(x^k)$ to correspond to the function value of the function $f_1$ at $x^k$, etc. Finally, for easy reference, we define a set of variables:
\[
S := \left\{x^1,\ldots,x^{N+1},y^1,\ldots,y^{N}\right\}.
\]

We thus arrive at the following performance estimation problem:
\begin{equation}\label{PEP}
\begin{aligned}
  \max & \ \left(\min_{1\leq k\leq N+1} \left\|g_1(x^k)-g_2(x^k)\right\|^2\right)\\
 \mbox{s.t. } \ & \tfrac{1}{2(1-\tfrac{\mu_\ell}{L_\ell})}\left(\tfrac{1}{L_\ell}\left\|g_{\ell}(u)-g_{\ell}(v)\right\|^2+\mu_\ell\left\|u-v\right\|^2
-\tfrac{2\mu_\ell}{L_\ell}\left\langle g_{\ell}(v)-g_{\ell}(u),v-u\right\rangle\right)\leq\\
  & \ \ \ \ \ f_{\ell}(u)-f_{\ell}(v)-\left\langle g_{\ell}(v), u-v\right\rangle \ \ \forall  u,v \in S, \; \ell \in \{1,2\}\\
 &    g_{1}(y^k)=g_{2}(x^k) \ \ k\in\left\{1, \ldots, N\right\} \\
 & x^{k+1}= y^k + \alpha(y^{k}-x^{k}) \ \ k\in\left\{1, \ldots, N\right\} \\
&  f_1(u)-f_2(u)-\frac{1}{2(L_1-\mu_2)}\|g_1(u)-g_2(u)\|^2\geq f^\star  \ \ \forall  u \in S\\
  &  f_1(x^1)-f_2(x^1)-f^\star \leq \Delta,
\end{aligned}
\end{equation}
where $S, \; g_{\ell}(u), f_\ell(u) \; (\ell \in {1,2}, u \in S)$ are decision variables, and
where $\Delta,\mu_1,L_1,\mu_2,L_2, N$ are fixed parameters.

By the \emph{descent lemma} (Lemma~\ref{Lem_de}),
 the constraints $f(x)\geq f^\star$ for each $x\in\mathbb{R}^n$ is replaced by
\begin{equation}
\label{PEP_descent_lemma_constraints}
f_1(u)-f_2(u)-\frac{1}{2(L_1-\mu_2)}\|g_1(u)-g_2(u)\|^2\geq f^\star  \ \ \forall \; u \in S.
\end{equation}
The optimality conditions for the  convex subproblem,
\[
y^{k}\in \text{argmin}_{x\in\mathbb{R}^n} f_1(x)-\left(f_2(x^k)+\langle g_2^k, x-x^k\rangle\right),
\]
are equivalent to  $ g_{1}(y^k)=g_{2}(x^k)$ for some $g_1(y^k)\in\partial{f_1}(y^{k})$; see Theorem 3.63 in \cite{Amir} (Fermat's theorem). {(For a more detailed discussion of criticality in D.C. programming, see e.g.\ \cite[Section 2]{abbaszad2024DCA}.)}

The final problem \eqref{PEP} may be written as an SDP problem, since it is linear in the gram matrix of the vector variables, after using the equality
constraints to reduce variables.

To illustrate this fact, we provide some graphs where the SDPs were solved numerically through MOSEK~\cite{mosek}.
In Figure 1 we consider the worst-case scenario after $N=12$ iterations of Algorithm \ref{Alg:BCDA}, for growing values of $\alpha$, and for
$L_1=L_2=2$,  $\mu_1=1$,  and $\mu_2=0$. {Note that this choice of curvature parameters is not covered by the result in Theorem \ref{thm:main}, since there it is assumed that $\mu_1 = \mu_2 > 0$.}
In the figure we see that some choices of $\alpha$ result in poorer rates of convergence than $\alpha = 0$ (the usual DCA).
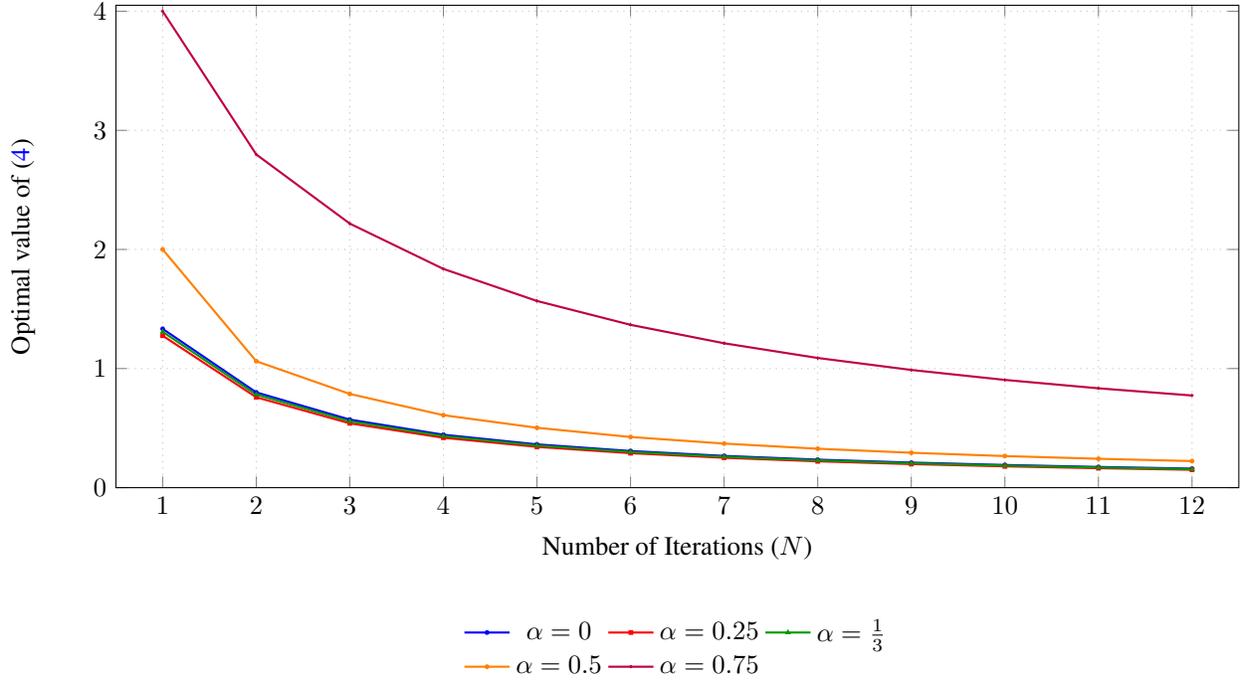
\begin{figure}[h!]
\label{fig:1}
\centering
\begin{tikzpicture}
\begin{axis}[
    width=1\textwidth,
    height=8cm,
    ylabel={Optimal value of \eqref{PEP}},
    xlabel={Number of Iterations ($N$)},
    xmin=0.5, xmax=12.5,
    ymin=0, ymax=4.05,
    xtick={1,...,12},
    yticklabel style={/pgf/number format/fixed, /pgf/number format/precision=2},
    grid=both,
    major grid style={dotted,gray!50},
    minor grid style={dashed,gray!20},
    minor x tick num=0,
    minor y tick num=0,
    legend style={
        at={(0.5,-0.25)}, 
        anchor=north,
        legend columns=3,
        draw=none,
        fill=none,
        cells={align=left}
    }
]

\addplot[
    thick, color=blue, mark=*, mark options={solid}, mark size=.5
] table[x=N,y=OPT_Value,col sep=comma] {Data/M2_0/OPT_alpha000.csv};
\addlegendentry{$\alpha = 0$}

\addplot[
    thick, color=red, mark=square*, mark options={solid}, mark size=.5
] table[x=N,y=OPT_Value,col sep=comma] {Data/M2_0/OPT_alpha025.csv};
\addlegendentry{$\alpha = 0.25$}

\addplot[
    thick, color=green!60!black, mark=triangle*, mark options={solid}, mark size=.5
] table[x=N,y=OPT_Value,col sep=comma] {Data/M2_0/OPT_alpha33.csv};
\addlegendentry{$\alpha = \frac{1}{3}$}

\addplot[
    thick, color=orange, mark=o, mark options={solid}, mark size=.5
] table[x=N,y=OPT_Value,col sep=comma] {Data/M2_0/OPT_alpha050.csv};
\addlegendentry{$\alpha = 0.5$}

\addplot[
    thick, color=purple, mark=x, mark options={solid}, mark size=.5
] table[x=N,y=OPT_Value,col sep=comma] {Data/M2_0/OPT_alpha075.csv};
\addlegendentry{$\alpha = 0.75$}

\end{axis}
\end{tikzpicture}
\caption{Performance bounds for DCA ($\alpha=0$) and boosted DCA when $L_1=L_2=2$, and $\mu_1=1,\ \mu_2=0$.}
\end{figure}

In Figure \ref{fig:2}, we again consider $N=12$ iterations, but this time with the curvature parameters $L_1=L_2=2$, and $\mu_1=0,\ \mu_2=1$.
Here, one does see improving rates of convergence for growing values of $\alpha$, i.e.\ boosted DCA does outperform DCA here,
{although this is not predicted by Theorem \ref{thm:main}, since $\mu_1=0$.
}
\begin{figure}[h!]
\centering
\begin{tikzpicture}
\begin{axis}[
    width=1\textwidth,
    height=8cm,
    ylabel={Optimal value of \eqref{PEP}},
    xlabel={Number of Iterations ($N$)},
    xmin=0.5, xmax=12.5,
    ymin=0, ymax=1.02,
    xtick={1,...,12},
    yticklabel style={/pgf/number format/fixed, /pgf/number format/precision=2},
    grid=both,
    major grid style={dotted,gray!50},
    minor grid style={dashed,gray!20},
    minor x tick num=0,
    minor y tick num=1,
    legend style={
        at={(0.5,-0.25)}, 
        anchor=north,
        legend columns=3,
        draw=none,
        fill=none,
        cells={align=left}
    }
]

\addplot[
    thick, color=blue, mark=*, mark options={solid}, mark size=.5
] table[x=N,y=OPT_Value,col sep=comma] {Data/M1_0/OPT_alpha000.csv};
\addlegendentry{$\alpha = 0$}

\addplot[
    thick, color=red, mark=square*, mark options={solid}, mark size=.5
] table[x=N,y=OPT_Value,col sep=comma] {Data/M1_0/OPT_alpha025.csv};
\addlegendentry{$\alpha = 0.25$}

\addplot[
    thick, color=green!60!black, mark=triangle*, mark options={solid}, mark size=.5
] table[x=N,y=OPT_Value,col sep=comma] {Data/M1_0/OPT_alpha33.csv};
\addlegendentry{$\alpha = \frac{1}{3}$}

\addplot[
    thick, color=orange, mark=o, mark options={solid}, mark size=.5
] table[x=N,y=OPT_Value,col sep=comma] {Data/M1_0/OPT_alpha050.csv};
\addlegendentry{$\alpha = 0.5$}

\addplot[
    thick, color=purple, mark=x, mark options={solid}, mark size=.5
] table[x=N,y=OPT_Value,col sep=comma] {Data/M1_0/OPT_alpha075.csv};
\addlegendentry{$\alpha = 0.75$}

\end{axis}
\end{tikzpicture}
\caption{\label{fig:2} Performance bounds for DCA ($\alpha=0$) and boosted DCA when $L_1=L_2=2$, and $\mu_1=0,\ \mu_2=1$.}
\end{figure}
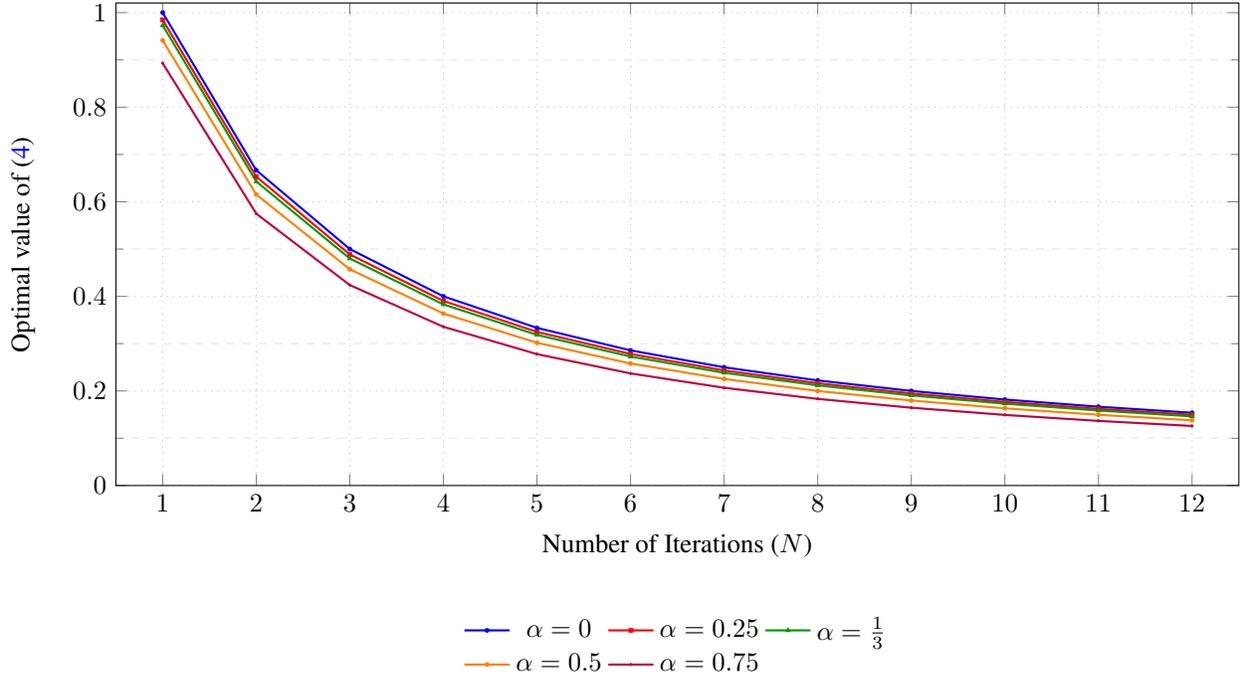

In Figure \ref{fig:3}, we plot how the convergence criterion depends on $\alpha$, for $N=10$ iterations and
parameter values $L_1=1,\ L_2=2$, and $\mu_1=0.5,\ \mu_2=0.25$. Note that the graph is a convex function of $\alpha$ with minimizer $\alpha^\star \approx 0.4$. Also note
that --- when the value of $\alpha$ gets too large --- the worst-case performance of DCA is actually better.
{Once again, the results in Figure \ref{fig:3} are not covered by Theorem \ref{thm:main}, since $\mu_1\neq \mu_2$ here.
}
\begin{figure}[h!]
\centering
\begin{tikzpicture}
\begin{axis}[
    width=1\textwidth,
    height=7cm,
    xlabel={$\alpha$},
    ylabel={Optimal value of \eqref{PEP}},
    xmin=-0.05, xmax=1.05,
    grid=both,
    major grid style={dotted,gray!50},
    minor grid style={dashed,gray!20},
    minor x tick num=0,
    minor y tick num=0
]

\addplot[
    thick,
    color=blue,
    mark=*,
    mark options={solid},
    mark size=.5
]
table[x=alpha, y=OPT_Value, col sep=comma] {Data/OPT_vs_alpha.csv};

\end{axis}
\end{tikzpicture}
\caption{\label{fig:3} Performance bounds for DCA ($\alpha=0$) and boosted DCA for the case that $L_1=1\ L_2=2$, and $\mu_1=0.5,\ \mu_2=0.25$ and $N=10$.}
\end{figure}
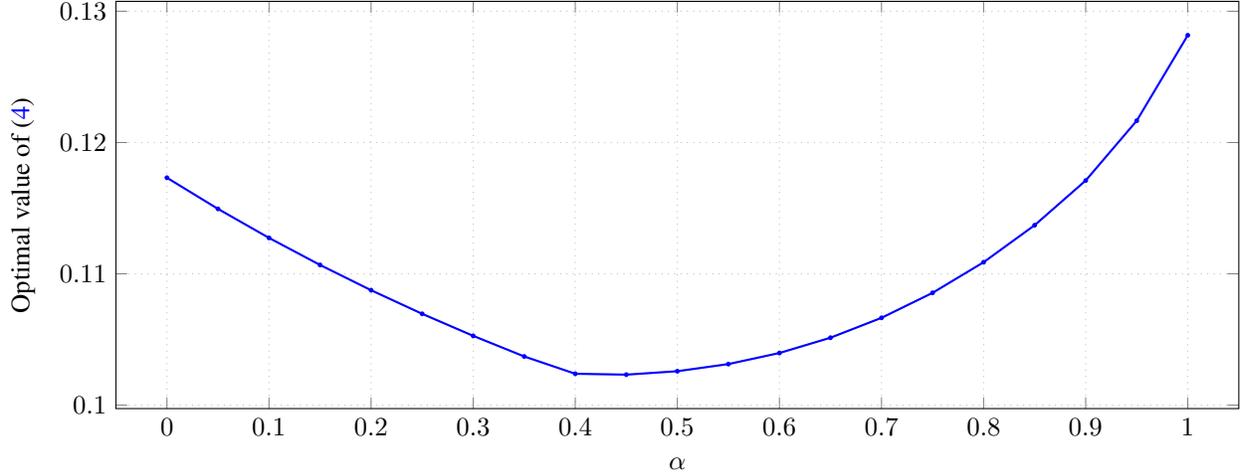

\subsection*{Adding the Polyak-\L ojasiewicz (P{\L}) inequality}
We may also consider the DC problem \eqref{P} in the case where the objective satisfies the Polyak-\L ojasiewicz (P{\L}) inequality,
defined as follows.

\begin{dfn}
A differentiable function $f$ is said to satisfy the P{\L}\ inequality on $X\subseteq \mathbb{R}^n$ if there exists $\eta>0$ such that
\[
f(x)-f^\star \leq \tfrac{1}{2\eta} \| \nabla f(x)\|^2, \ \ \ \forall x\in X.
\]
\end{dfn}
Under the P{\L}\ inequality assumption, every stationary point of $f$ is a global minimizer, but
it
is  a weaker assumption  than convexity.

As done in the performance analysis of DCA in \cite{abbaszad2024DCA}, one may
incorporate  the P{\L}\ inequality in the SDP problem \eqref{PEP}, by adding the constraints:
\begin{equation}
\label{PL constraint}
f_1(u) - f_2(u) -f^\star \leq \tfrac{1}{2\eta} \| g_1(u) - g_2(u)\|^2 \; \forall u \in S.
\end{equation}
Here we implicitly assume that the variables in $S$ only take values in $X$.

Note that one could, instead of using~\eqref{PL constraint}, build on~\cite[Proposition 3.4]{rubbens2025constructive} to obtain a more accurate picture for this class of functions (at the cost of much more complicated formulations). This is done in the numerical experiments below.

\section{Sublinear convergence of boosted DCA}
\label{sec:pep results}
In this section we give a proof of Theorem \ref{thm:main}.
The proof proceeds in the usual manner for SDP performance estimation: we provide suitable multipliers for the constraints
of problem \eqref{PEP}, and then aggregate the constraints with these multipliers to obtained the required inequality.
The values of the multipliers in Table \ref{tab:1} below were obtained in a computer-assisted manner: after solving the (dual) SDP for different choices of the parameters,
we could guess the analytic expressions for the multipliers, and subsequently verify the guesses in a rigorous manner. In other words, our proof of
Theorem \ref{thm:main} was `discovered' in a computer-assisted manner, but the proof itself is self-contained, and does not rely on any computational results.

\begin{proof} (of Theorem \ref{thm:main})
We decompose the proof into three parts:
\begin{enumerate}
    \item Reformulate the desired statement.
    \item Show that the reformulation can be proved using a simple one-iteration analysis and using induction.
    \item Perform the one-iteration analysis.
\end{enumerate}

\paragraph{Part 1 of the proof: reformulation.}
The main result follows if we prove the following inequality:

\begin{equation}\label{eq:target}
    \begin{split}
       & \left[\frac12+\alpha\frac{\mu}{L}\right]\|g_1(x^1)-g_2(x^1)\|^2 +
        \sum_{i=2}^N\left[1+\alpha\frac{\mu}{L}\right]\|g_1(x^i)-g_2(x^i)\|^2+\left[\frac{1-\frac12\frac\mu L}{1-\frac\mu  L}\right]\|g_1(x^{N+1})-g_2(x^{N+1})\|^2\\
       \leq &  L \left[ f_1(x^1)-f_2(x^1)-f_1(x^{N+1})+f_2(x^{N+1})+\frac{1}{2(L-\mu)}\|g_1(x^{N+1})-g_2(x^{N+1})\|^2 \right].
    \end{split}
\end{equation}
Indeed, using the upper bound on $f^\star$ from Lemma \ref{Lem_de} with $x = x^{N+1}$, we get
\begin{equation*}
 \begin{split}
   & \left[\frac12+\alpha\frac{\mu}{L}\right]\|g_1(x^1)-g_2(x^1)\|^2 +
     \sum_{i=2}^N\left[1+\alpha\frac{\mu}{L}\right]\|g_1(x^i)-g_2(x^i)\|^2 +
    \left[\frac{1-\frac12\frac\mu L}{1-\frac\mu L}\right]\|g_1(x^{N+1})-g_2(x^{N+1})\|^2 \\
   \le & L \left[ f_1(x^1)-f_2(x^1)-f^\star \right].
   \end{split}
\end{equation*}
    Then, lower-bounding the left-hand side by the smallest gradient norm observed, we obtain:
\begin{equation*}
    \begin{aligned}
        \min_{1\leq i\leq N+1}\left(\|g_1(x^i)-g_2(x^i)\|^2\right)\left[\frac12+\alpha\frac{\mu}{L}+\sum_{i=2}^N\left(1+\alpha\frac{\mu}{L}\right)+\frac{1-\frac12\frac\mu L}{1-\frac\mu L}\right]\leq L \left[ f_1(x^1)-f_2(x^1)-f^\star \right],
    \end{aligned}
\end{equation*}
where $\left[\frac12+\alpha\frac{\mu}{L}+\sum_{i=2}^N\left(1+\alpha\frac{\mu}{L}\right)+\frac{1-\frac12\frac\mu L}{1-\frac\mu L}\right]=(1+\kappa\alpha)N + \frac{1}{2(1-\kappa)}$, yielding the desired result.

\paragraph{Part 2 of the proof: reducing to single iteration analysis.}
We may prove that the required result \eqref{eq:target} holds by proving that the following inequality holds for a given iteration $k \in \mathbb{N}$:

\begin{equation}\label{eq:one_iteration_inequality}
           f(x^{k+1})+\frac{1}{2L}\|g_1(x^{k+1})-g_2(x^{k+1})\|^2+
         \frac1L \left[\frac12 +\alpha \frac\mu L\right]\|g_1(x^k)-g_2(x^k)\|^2
         \leq  f(x^k).
  \end{equation}
We will verify in part 3 of this proof that \eqref{eq:one_iteration_inequality} indeed holds at each iteration, but first show that it implies  the required result \eqref{eq:target}.
Assuming now that \eqref{eq:one_iteration_inequality} holds for any $k \in \mathbb{N}$, we will prove by induction that, for any $k \ge 2$, one has
\begin{equation}
\label{eq:induction}
    \begin{split}
        & f(x^k)+\frac1L\left[\frac12+\alpha\frac{\mu}{L}\right]\|g_1(x^1)-g_2(x^1)\|^2+
         \sum_{i=2}^{k-1}\frac1L\left[1+\alpha\frac{\mu}{L}\right]\|g_1(x^i)-g_2(x^i)\|^2+\frac{1}{2L}\|g_1(x^k)-g_2(x^k)\|^2 \\
        \leq & f(x^1),
    \end{split}
\end{equation}
with the usual convention that the empty sum is zero (for $k=2$).

The base case for the induction argument is obtained by setting $k=1$ in \eqref{eq:one_iteration_inequality}.
For the induction step, let $k\geq 2$ and assume that \eqref{eq:induction} holds.
Then applying inequality~\eqref{eq:one_iteration_inequality} to lower bound $f(x^k)$ yields
\begin{equation*}
    \begin{split}
        & f(x^{k+1})+\frac1L\left[\frac12+\alpha\frac{\mu}{L}\right]\|g_1(x^1)-g_2(x^1)\|^2+\sum_{i=2}^{k}\frac1L\left[1+\alpha\frac{\mu}{L}\right]\|g_1(x^i)-g_2(x^i)\|^2+\frac{1}{2L}\|g_1(x^{k+1})-g_2(x^{k+1})\|^2 \\
        \leq & f(x^1).
    \end{split}
\end{equation*}
Thus we have shown that \eqref{eq:induction} holds for all $k \ge 2$.
For $k=N$, we therefore obtain
\begin{equation}\label{eq:final_recursion}
    \begin{split}
      &  \frac1L\left[\frac12+\alpha\frac{\mu}{L}\right]\|g_1(x^1)-g_2(x^1)\|^2+\sum_{i=2}^{N}\frac1L\left[1+\alpha\frac{\mu}{L}\right]\|g_1(x^i)-g_2(x^i)\|^2+\frac{1}{2L}\|g_1(x^{N+1})-g_2(x^{N+1})\|^2 \\
      \leq  & f(x^1)-f(x^{N+1}).
    \end{split}
\end{equation}
Proving~\eqref{eq:target} requires lower-bounding \[ f(x^1)-f(x^{N+1})+\frac{1}{2(L-\mu)}\|g_1(x^{N+1})-g_2(x^{N+1})\|^2,\] as opposed to $f(x^1)-f(x^{N+1})$.
To adjust the bound~\eqref{eq:final_recursion} to get to the desired inequality, we simply add $\frac{1}{2(L-\mu)}\|g_1(x^{N+1})-g_2(x^{N+1})\|^2$ on both sides; this leads directly to~\eqref{eq:target} (after scaling by $L$).

\paragraph{Part 3 of the proof: performing the single iteration analysis.}
We now verify that \eqref{eq:one_iteration_inequality} indeed holds at each iteration $k$.

Recall that  $f_1,f_2 \in \mathcal{F}_{\mu,L}(\mathbb{R}^n)$, and therefore, by \eqref{fmuLinterpolation}, satisfy
\begin{equation}
Q_\ell(u,v) \le 0, \quad \mbox{($u,v \in \mathbb{R}^n$,   $\ell \in \{1,2\}$)},
\label{eq:interpolation constraints}
\end{equation}
where we define
\begin{eqnarray*}
Q_\ell(u,v) &=& \tfrac{1}{2(1-\tfrac{\mu}{L})}\left(\tfrac{1}{L}\left\|g_{\ell}(u)-g_{\ell}(v)\right\|^2+\mu\left\|u-v\right\|^2
 -\tfrac{2\mu}{L}\left\langle g_{\ell}(v)-g_{\ell}(u),v-u\right\rangle\right)  \\
&& - f_{\ell}(u)+f_{\ell}(v)+\left\langle g_{\ell}(v), u-v\right\rangle, \\
\end{eqnarray*}
and where $g_\ell(u)$ denotes a subgradient of $f_\ell$ at $u$, as before.

We now consider the inequalities \eqref{eq:interpolation constraints} for specific choices of $(u,v)$ as listed in Table \ref{tab:1},
together with corresponding multipliers, $\lambda_{\ell,u,v}$, also listed in the table.
{\renewcommand{\arraystretch}{1.3}\begin{table}[h!]
\begin{center}
\begin{tabular}{@{}llllll@{}}
\toprule
  $(u,v)$    & $(x^k,y^k)$& $(y^k,x^k)$ & $(x^{k+1},y^k)$& $(y^k,x^{k+1})$& $(x^{k+1},x^k)$\\  \midrule
  $\ell$     &  $1$ & $1$ & $1$ & $1$ & $2$ \\
  $\lambda_{\ell,u,v}$ & $1 + \frac{2\alpha \mu}{L}$ & $\frac{2\alpha \mu}{L} $ & $\frac{1}{\alpha} - 1$  & $\frac{1}{\alpha}$ & $1$\\
  \bottomrule
\end{tabular}
\end{center}
\caption{\label{tab:1} Multipliers for the interpolation constraints $Q_\ell(u,v) \le 0$ for a fixed index $k$ and specific choices for $(u,v)$.}
\end{table}}
Note that all the multipliers in the table are nonnegative if $\alpha \in (0,1]$.
Multiplying each inequality from \eqref{eq:interpolation constraints} by the corresponding nonnegative multiplier from Table \ref{tab:1} for the specific choice
of $(u,v)$ listed in the table, and summing, we obtain the inequality:
\begin{equation*}
    \begin{aligned}
        0\geq & f_1(x^{k+1})-f_2(x^{k+1})-f_1(x^k)+f_2(x^k)+\tfrac1{2L} \|g_1(x^{k+1})-g_2(x^{k+1})\|^2+\tfrac1L \left(\frac12+\alpha\frac\mu L\right)\|g_1(x^k)-g_2(x^k)\|^2\\
        &+\tfrac{1}{2 \alpha  L (L-\mu ) (\alpha  \mu +2 (1-\alpha ) L)} \\
        &\quad\times\| (\alpha  \mu -2 (\alpha -1) L)g_1(x^{k+1})+L(\alpha-2)g_2(x^k)+\alpha(L-\mu)g_2(x^{k+1})+\alpha  L (-\alpha  \mu +\mu +L)(x^k-y^k)\|^2\\
        &+\tfrac{\alpha L-2\mu}{2(L-\mu)(2L(\alpha-1)-\alpha \mu)}\|-g_2(x^k)+g_2(x^{k+1})+(L+\alpha \mu)(x^k-y^k)\|^2\\
        &+\tfrac{\mu(L+2\alpha L+2\alpha \mu)}{2L^2(L-\mu)}\|g_1(x^k)-g_2(x^k)-L(x^k-y^k)\|^2.
    \end{aligned}
\end{equation*}
The coefficients of the last three terms are nonnegative when $0<\alpha \leq \min \left\{2\frac{\mu}{L}, \frac1{1-\frac{1}{2}\frac{\mu}{L}}\right\}=2\kappa$. Removing the last three nonnegative terms on the right-hand-side, one arrives to the desired
\begin{equation*}
    \begin{aligned}
         0 \geq &f_1(x^{k+1})-f_2(x^{k+1})-f_1(x^k)+f_2(x^k)+\tfrac1{2L} \|g_1(x^{k+1})-g_2(x^{k+1})\|^2+\tfrac1L \left(\frac12+\alpha\frac\mu L\right)\|g_1(x^k)-g_2(x^k)\|^2\\
         =&f(x^{k+1})-f(x^k)+\tfrac1{2L} \|g_1(x^{k+1})-g_2(x^{k+1})\|^2+\tfrac1L \left(\frac12+\alpha\frac\mu L\right)\|g_1(x^k)-g_2(x^k)\|^2,
    \end{aligned}
\end{equation*}
and the proof therefore holds when $0<\alpha\leq \min\left\{2\kappa,1\right\}$.
\end{proof}

{
\section{Sublinear convergence of boosted DCA with backtracking}
\label{sec:backtracking}
By Theorem \ref{thm:main}, the optimal choice for fixed $\alpha > 0$ in Algorithm \ref{Alg:BCDA} is $\alpha = \min \{1,2\kappa\}$. However, this requires knowledge of the parameter~$\kappa = \mu/L$, while the usual DCA (with $\alpha = 0$) does not require knowledge of the parameters~$\mu$ and~$L$.

By leveraging our proof technique for Theorem \ref{thm:main}, we may also analyse a variant of boosted CDA where the parameters $\mu$ and $L$ are not known, but only estimated at each step, and where we use backtracking to find a suitable step length.

Backtracking analysis dates back to the work by Armijo~\cite{Armijo} and Goldstein~\cite{goldstein1962cauchy} (see, e.g.,\cite[Corollary 2]{Armijo}), and our aim here is only to show that this type of (standard) analysis may be combined with our proof technique to obtain a parameter-free variant of Algorithm~\ref{Alg:BCDA}.

With reference to Algorithm \ref{alg:backtracking}, the main ideas of the backtracking analysis are as follows:
\begin{itemize}
    \item 
At each iteration $k$, we have estimates $\mu^{(k)}$ of $\mu$ and 
$L^{(k)}$ of $L$ respectively.
    \item
    By the proof of Theorem \ref{thm:main}, the following
    inequality  holds at each iteration $k$ if using the correct values of~$(\mu,L,\alpha)$:\begin{equation}\label{eq:one_iteration_inequality}
           f(x^{k+1})+\frac{1}{2L}\|g_1(x^{k+1})-g_2(x^{k+1})\|^2+
         \frac1L \left[\frac12 +\alpha \frac\mu L\right]\|g_1(x^k)-g_2(x^k)\|^2
         \leq  f(x^k).
  \end{equation}
  At each iteration, we test if it also holds for the estimates $\mu^{(k)}$,
$L^{(k)}$ and $\alpha^{(k)} := \min \{1,2\mu^{(k)}/L^{(k)}\}$. If not, we adjust these estimates as detailed in step 6 of Algorithm \ref{alg:backtracking}. In particular, we use a fixed parameter $\beta > 1$ to decrease $\mu^{(k)}$
via $\mu^{(k)}\leftarrow \mu^{(k)}/\beta$, and increase $L^{(k)}$ via $L^{(k)}\leftarrow \beta L^{(k)}$, if needed.
\item 
In the statement of Theorem~\ref{thm:backtracking} below, the quantity $\bar{K}$ bounds the maximum number of parameter updates in step 6 of Algorithm~\ref{alg:backtracking}. Thus, for each iteration $k$, one has $L^{(k)}\leq \bar{L}:=\beta^{\bar{K}}\max\{L^{(0)}, L\}$ and $\mu^{(k)}\geq\bar{\mu}:= \beta^{-\bar{K}}\min\{\mu^{(0)},\mu\}$. Thus, we also have $\alpha^{(k)}\geq \bar{\alpha}:=\min\left\{2\frac{\bar{\mu}}{\bar{L}},1\right\}$ and $\kappa\geq \bar{\kappa}:=\frac{\bar{\mu}}{\bar{L}}$.
\end{itemize}

\begin{algorithm}[H]
\caption{Boosted DCA with backtracking\label{alg:backtracking}}
\begin{algorithmic}
\STATE Pick $x^1\in\mathbb{R}^n$ (starting point), $N \in \mathbb{N}$ (number of iterations), ${\mu}^{(0)}$ (estimate of strong convexity), ${L}^{(0)}$ (estimate of smoothness), and $\beta>1$ (backtracking strength) with $0<{\mu}^{(0)}<{L}^{(0)}$.
\STATE For $k=1, 2, \ldots, N$ perform the following steps:\\
\begin{enumerate}
\item Pick   $g_2^{k}\in\partial f_2(x^{k})$.
\item
Set
\begin{align*}
  y^{k}\in \text{argmin}_{x\in\mathbb{R}^n} f_1(x)-\left(f_2(x^k)+\langle g_2^k, x-x^k\rangle\right).
\end{align*}
\item
Set $d^k = y^k - x^k$
\item Set $L^{(k)}\leftarrow L^{(k-1)}$, $\mu^{(k)}\leftarrow \mu^{(k-1)}$ 
\item Set $\alpha^{(k)} \leftarrow \min\left\{2\tfrac{{\mu^{(k)}}}{{L^{(k)}}},1\right\}$ and $x^{k+1} \leftarrow y^k + \alpha^{(k)} d^k$
\item While~\eqref{eq:one_iteration_inequality} is not satisfied with $\mu\leftarrow {\mu}^{(k)}$, $L\leftarrow L^{(k)}$, $\alpha\leftarrow \alpha^{(k)}$ do
\begin{align*}
  L^{(k)}&\leftarrow \beta L^{(k)} \text{ (increase estimate of $L$) }\\
  \mu^{(k)}&\leftarrow \mu^{(k)}/\beta \text{ (decrease estimate of $\mu$). }\\
  \alpha^{(k)} &\leftarrow \min\left\{2\tfrac{{\mu^{(k)}}}{{L^{(k)}}},1\right\} \text{ (decrease boosted step)}\\
  x^{k+1} &\leftarrow y^k + \alpha^{(k)} d^k \text{ (new trial boosted step)}
\end{align*}
\end{enumerate}
\end{algorithmic}
\end{algorithm}
The following theorem is a direct analog of Theorem \ref{thm:main}, but for the worst-case analysis of the backtracking  Algorithm~\ref{alg:backtracking}.

\begin{theorem}
\label{thm:backtracking}
Let $f_1\in\mathcal{F}_{\mu, L}({\mathbb{R}^n})$ and $f_2\in\mathcal{F}_{\mu, L}({\mathbb{R}^n})$, $0 \le \mu < L < \infty$. After $N$ iterations of boosted DCA with backtracking (Algorithm \ref{alg:backtracking}) with initial guesses $0<\mu^{(0)}<L^{(0)}$ and backtracking parameter $\beta>1$, one has,
 \[
    \min_{1\leq k\leq N+1}\frac{\left\|g_1^k-g_2^k\right\|^2}{\bar{L}}\leq \frac{(f(x^1)-f^\star) }{(1+\bar{\kappa}\bar{\alpha})N + \frac{1}{2(1-\bar{\kappa})}},
\]
where  $\bar{K}:= \max\left\{0,\lceil \log_{\beta}\left(\beta L/L^{(0)}\right)\rceil, \lceil \log_{\beta}\left(\beta\mu^{(0)}/\mu\right)\rceil\right\}$, $\bar{L}:=\beta^{\bar{K}}\max\{L^{(0)}, L\}$, $\bar{\mu}:= \beta^{-\bar{K}}\min\{\mu^{(0)},\mu\}$, $\bar{\alpha}:=\min\left\{2\frac{\bar{\mu}}{\bar{L}},1\right\}$, and $\bar{\kappa}:=\frac{\bar{\mu}}{\bar{L}}$.
\end{theorem}

\paragraph{Proof:} At each iteration $k$, it holds that
\begin{equation}\label{eq:one_iteration_bt}
           f(x^{k+1})+\frac{1}{2L^{(k)}}\|g_1(x^{k+1})-g_2(x^{k+1})\|^2+
         \frac1{L^{(k)}} \left[\frac12 +\alpha^{(k)} \frac{\mu^{(k)}}{L^{(k)}}\right]\|g_1(x^k)-g_2(x^k)\|^2
         \leq  f(x^k).
  \end{equation}
By definition of $\bar{L}\geq L^{(k)}$, $\bar{\mu}\leq \mu^{(k)}$, and $\bar{\alpha}\leq \alpha^{(k)}$, we get
\begin{equation}\label{eq:one_iteration_bt}
           f(x^{k+1})+\frac{1}{2\bar{L}}\|g_1(x^{k+1})-g_2(x^{k+1})\|^2+
         \frac1{\bar{L}} \left[\frac12 +\bar{\alpha} \frac{\bar{\mu}}{\bar{L}}\right]\|g_1(x^k)-g_2(x^k)\|^2
         \leq  f(x^k),
  \end{equation}
 which is satisfied at all iterations (given that after at most $\bar{K}$ updates of the Lipschitz and strong convexity estimates, the corresponding values are both valid smoothness and strong convexity parameters for the problem at hand).
 
By induction, we thereby also get:
\begin{equation*}
    \begin{split}
        & f(x^{k+1})+\frac1{\bar{L}}\left[\frac12+\bar{\alpha}\frac{\bar{\mu}}{\bar{L}}\right]\|g_1(x^1)-g_2(x^1)\|^2+\sum_{i=2}^{k}\frac1{\bar{L}}\left[1+\bar{\alpha}\frac{\bar{\mu}}{\bar{L}}\right]\|g_1(x^i)-g_2(x^i)\|^2+\frac{1}{2\bar{L}}\|g_1(x^{k+1})-g_2(x^{k+1})\|^2 \\
        \leq & f(x^1).
    \end{split}
\end{equation*}
For $k=N$, we therefore obtain
\begin{equation}\label{eq:final_recursion}
    \begin{split}
      &  \frac1{\bar{L}}\left[\frac12+\bar{\alpha}\frac{\bar{\mu}}{\bar{L}}\right]\|g_1(x^1)-g_2(x^1)\|^2+\sum_{i=2}^{N}\frac1{\bar{L}}\left[1+\bar{\alpha}\frac{\bar{\mu}}{\bar{L}}\right]\|g_1(x^i)-g_2(x^i)\|^2+\frac{1}{2\bar{L}}\|g_1(x^{N+1})-g_2(x^{N+1})\|^2 \\
      \leq  & f(x^1)-f(x^{N+1}).
    \end{split}
\end{equation}
To arrive at the target inequality, we have to lower bound $f(x^1)-f^\star$. Since the objective function is also $\bar{L}$-smooth and $\bar{\mu}$-strongly convex, we can use the trick from the  proof of Theorem \ref{thm:main}, and instead lower bound
\[ f(x^1)-f(x^{N+1})+\frac{1}{2(\bar{L}-\bar{\mu})}\|g_1(x^{N+1})-g_2(x^{N+1})\|^2.\]
To adjust the bound~\eqref{eq:final_recursion} to get to the desired inequality, we simply add $\frac{1}{2(\bar{L}-\bar{\mu})}\|g_1(x^{N+1})-g_2(x^{N+1})\|^2$ on both sides. \qed
}

\section{Consequences for gradient descent}
\label{sec:GD}
In this section we look at the implications of our approach in the special case of gradient descent under the P{\L} assumption.
As will become clear, our approach allows for an improved analysis for $L$-smooth functions that meet the P{\L} condition.
Of course, one may also study performance analysis of gradient descent for such functions directly, as was done in \cite{abbaszadehpeivasti2022PL}, but we will show improved results via the boosted DCA analysis.

Consider the following optimization problem
\begin{align*}
    \inf_{x\in\mathbb{R}^n} \ f(x)
\end{align*}
where $f$ is lower-bounded by $f^\star$ and is an $L$-smooth function on $\mathbb{R}^n$ with $L<\infty$, that is $f\in\mathcal{F}_{-L,L}(\mathbb{R}^n)$ and differentiable. It is easily seen that the function $f$ can be written as
\[
f(x):=\frac{L}{2}\|x\|^2-\left(\frac{L}{2}\|x\|^2-f(x)\right) =  f_1(x) - f_2(x),
\]
where we define $f_1(x):=\frac{L}{2}\|x\|^2$ and $f_2(x):=\frac{L}{2}\|x\|^2-f(x)$.
Both functions $f_1$ and $f_2$ are convex since $f$ is $L$-smooth. If we solve the subproblem of the DCA (Algorithm \ref{Alg1})
at iteration $k$, namely
\[
x^{k+1}=\arg\min_x\ \frac{L}{2}\|x\|^2-\left(\frac{L}{2}\left\|x^k\right\|^2-f(x^k)\right)-\left\langle {L}x^k-\nabla f(x^k), x-x^k\right\rangle,
\]
we get
\[
x^{k+1}=x^k-\frac{1}{L}\nabla f(x^k),
\]
which is exactly the gradient descent method with the classical step length $\frac{1}{L}$.
Similarly, the boosted DCA (Algorithm~\ref{Alg:BCDA}) corresponds to gradient descent with constant step lengths
$(1+\alpha)\frac{1}{L}$. This allows us to consider longer step lengths than $1/L$.

We may now adapt the SDP performance analysis problem \eqref{PEP} for the special case where $f_1(x) = \frac{L}{2}\|x\|^2$
  and $f_2 \in\mathcal{F}_{0, 2L}({\mathbb{R}^n})$, i.e.\ $\mu_2 = 0$ and $L_2 = 2L$.
  Note that we now have
  $
  g_1(x) = Lx,
  $
  so that several constraints in \eqref{PEP} simplify.
In particular, one may readily derive the following linear convergence result for boosted DCA under the P{\L} inequality.

\begin{theorem}
\label{thm:GD PL}
Let $f_1(x)=\frac{L}{2}\|x\|^2$, $f_2\in\mathcal{F}_{0, 2L}({\mathbb{R}^n})$ and $f(x)=f_1(x)-f_2(x)$. If
     $f$ satisfies P{\L} inequality on $X=\{x: f(x)\leq f(x^1)\}$ with modulus $0<\eta\leq L$, then for $x^1, x^2$  from boosted DCA with given
      $\alpha \in \left[0, \frac{\sqrt{\kappa^2 + 2\kappa +4}-\kappa}{2+ \kappa} \right]$,
      if $\kappa=\frac{\eta}{L}$ we have
\[
\frac{f(x^2)-f^\star}{f(x^1)-f^\star}\leq \beta < 1,
\]
where
\begin{equation}
\label{def:beta}
    \beta := \kappa \alpha^2 - \frac{\kappa(2-2\kappa)}{\kappa+2}\alpha + \frac{2-2\kappa}{2+\kappa}.
\end{equation}

\end{theorem}
\begin{proof}
  We assume w.l.o.g.\ that $f^\star=0$ and $x^1=0$, so that $f_1(x^1) = 0$. This implies that $x^2 = y^1 + \alpha(y^1-x^1) = (1+\alpha)y^1$.
  Moreover, the optimality conditions for the DCA subproblem that yields $y^1$ imply
  \[
  g_1(y^1) = -g_2(x^1) \Longleftrightarrow Ly^1 = - g_2(x^1).
  \]
Thus $x^2 = - \frac{1+\alpha}{L}g_2(x^1)$, and therefore
\begin{equation}
\label{eq:f1x2}
f_1(x^2) = \frac{L}{2}\left\|\frac{1+\alpha}{L}g_2(x^1)\right\|^2.
\end{equation}
   Our proof relies on the following identity, that may be verified through direct calculation:
\begin{align}
&{\left(\frac{L}{2}\left\|\frac{1+\alpha}{L}g_2(x^1)\right\|^2  -f_2(x^2)-f^\star\right)}-\beta\left( {-f_2(x^1)-f^\star} \right) \label{i} \\
&+\alpha \left( f_2(x^1)-f_2(y^{1})+\left\langle g_2(y^{1}), \frac{1}{L}g_2(x^{1})\right\rangle-\tfrac{1}{4L}\left\|g_2(x^1)-g_2(y^{1})\right\|^2\right) \label{ib}\\
&+ \left(\frac{(2-\kappa)}{\kappa+2}\alpha + \frac{2}{\kappa+2}\right) \left( f_2(y^1)-f_2(x^{1})-\left\langle g_2(x^{1}), \frac{1}{L}g_2(x^{1})\right\rangle-\tfrac{1}{4L}\left\|g_2(x^1)-g_2(y^{1})\right\|^2\right) \label{ii}\\
&+\left( f_2(x^2)-f_2(y^{1})-\left\langle g_2(y^{1}), \frac{1+\alpha}{L}g_2(x^1)-\frac{1}{L}g_2(x^{1})\right\rangle-\tfrac{1}{4L}\left\|g_2(x^2)-g_2(y^{1})\right\|^2\right) \label{iii}\\
&+\left(-\kappa\alpha^2 - \frac{2\kappa^2}{\kappa+2}\alpha + \frac{2\kappa}{\kappa+2}\right)\left(\frac{1}{2\eta}\left\|g_2(x^1)\right\|^2+f_2(x^1)\right) \label{iv}\\
&+\left(\frac{\kappa(2\alpha+1)}{\kappa+2}\right)\left(\frac{1}{2\eta}\left\|g_2(x^1)-g_2(y^1)\right\|^2-\frac{L}{2}\left\|\frac{1}{L}g_2(x^1)\right\|^2+f_2(y^1)\right) \label{v}\\
&=-\frac{1}{4L}\|g_2(x^2)-g_2(y^1)\|^2. \nonumber
\end{align}
By \eqref{eq:f1x2}, and since $f_1(x^1) = 0$, the first line \eqref{i} in the identity may be rewritten as:
$$
f(x^2)-f^\star- \beta\left(f(x^1)-f^\star  \right) = \frac{L}{2}\left\|\frac{1+\alpha}{L}g_2(x^1)\right\|^2  -f_2(x^2)-f^\star -\beta\left( {-f_2(x^1)-f^\star} \right).
$$
We will argue that all the terms \eqref{ib}, \eqref{ii}, \eqref{iii}, \eqref{iv}, and \eqref{v} are all nonnegative.
The identity will then imply $$f(x^2)-f^\star- \beta\left(f(x^1)-f^\star  \right) \le -\frac{1}{4L}\|g_2(x^2)-g_2(y^1)\|^2 \le 0,$$
so that the outer inequality yields the required result.

To this end, note that \eqref{ib} corresponds the interpolation constraint for $f_2$ at $(u,v) = (x^1,y^1)$; more precisely, \eqref{ib} equals
 the quantity
$- \alpha \mathcal{Q}_2(x^1,y^1)$ in \eqref{eq:interpolation constraints}  for $(u,v) = (x^1,y^1)$. Since the interpolation conditions require
$\mathcal{Q}_2(x^1,y^1) \le 0$, one has $- \alpha \mathcal{Q}_2(x^1,y^1) \ge 0$, as required.
Similarly, \eqref{ii} corresponds the interpolation constraint for $f_2$ at $(u,v) = (y^1,x^1)$ with
nonnegative multiplier $\left(\frac{(2-\kappa)}{\kappa+2}\alpha + \frac{2}{\kappa+2}\right)$,
and \eqref{iii} to  the interpolation constraint for $f_2$ at $(u,v) = (x^2,y^1)$.

Line \eqref{iv} corresponds to the P{\L} inequality \eqref{PL constraint} at $x^1$, and the multiplier
 $\left(-\kappa\alpha^2 - \frac{2\kappa^2}{\kappa+2}\alpha + \frac{2\kappa}{\kappa+2}\right)$
is nonnegative precisely if $\alpha$ is upper bounded as in the statement of the theorem.
Similarly, \eqref{v} corresponds to the P{\L} inequality \eqref{PL constraint} at $y^1$ with a nonnegative multiplier.
This completes the proof.

\end{proof}

{ Note that, under the conditions of the theorem, one has $f(x^2) \le f(x^1)$. As a consequence, all iterates lie in the set $X=\{x: f(x)\leq f(x^1)\}$  where the P{\L} inequality holds. Thus BCDA converges at a linear rate with constant $\beta < 1$ as defined in \eqref{def:beta}.}

Note that the right hand side is a convex parabola in $\alpha$ with minimum at  $\alpha^* = \frac{1-\kappa}{2+\kappa}$.
We immediately obtain the following corollary for gradient descent.

\begin{corollary}\label{cor:GD PL opt}
Let $f\in\mathcal{F}_{-L, L}({\mathbb{R}^n})$ satisfy the P{\L} inequality on $\{x: f(x)\leq f(x^1)\}$ with modulus $\eta$, then
\[
x^2 = x^1 - \frac{1 + \alpha^*}{L}\nabla f(x^1)
\]
satisfies
\[
\frac{f(x^2)-f^\star}{f(x^1)-f^\star}\leq \frac{4-\kappa^3 - 3\kappa}{(2+\kappa)^2},
\]
for $\alpha^* = \frac{1-\kappa}{2+\kappa}$ and $\kappa=\frac{\eta,}{L}$.
\end{corollary}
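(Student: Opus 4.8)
The plan is to derive Corollary~\ref{cor:GD PL opt} as a direct specialization of Theorem~\ref{thm:GD PL} to the step length $\alpha^* = \frac{1-\kappa}{2+\kappa}$. First I would invoke the identification established earlier in this section: for an $L$-smooth $f\in\mathcal{F}_{-L,L}(\mathbb{R}^n)$, the DC decomposition $f_1(x)=\frac{L}{2}\|x\|^2$, $f_2(x)=\frac{L}{2}\|x\|^2-f(x)$ satisfies $f_2\in\mathcal{F}_{0,2L}(\mathbb{R}^n)$, and one boosted DCA step with parameter $\alpha$ coincides with the gradient step $x^2=x^1-\frac{1+\alpha}{L}\nabla f(x^1)$. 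Hence the point $x^2$ appearing in the corollary is exactly the boosted DCA iterate to which Theorem~\ref{thm:GD PL} applies, so that $\frac{f(x^2)-f^\star}{f(x^1)-f^\star}\le\beta$ holds with the $\beta$ defined there, provided $\alpha^*$ lies in the admissible interval.

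Next I would verify admissibility. Since $f$ is $L$-smooth and satisfies the P{\L} inequality, the modulus necessarily obeys $\eta\le L$: combining the descent lemma $f(x)-f^\star\ge\frac{1}{2L}\|\nabla f(x)\|^2$ with the P{\L} bound $f(x)-f^\star\le\frac{1}{2\eta}\|\nabla f(x)\|^2$ forces $\eta\le L$, so $\kappa\in(0,1]$ and the hypotheses of Theorem~\ref{thm:GD PL} are met. For such $\kappa$ one has $\alpha^*=\frac{1-\kappa}{2+\kappa}\ge0$, and $\alpha^*\le\frac{\sqrt{\kappa^2+2\kappa+4}-\kappa}{2+\kappa}$ because, after clearing the common positive denominator $2+\kappa$, this reduces to $1\le\sqrt{\kappa^2+2\kappa+4}$, which is trivially true. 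Thus $\alpha^*$ falls inside the range required by Theorem~\ref{thm:GD PL}.

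Finally I would evaluate $\beta$ at $\alpha^*$. The key observation is that $\beta(\alpha)=\kappa\alpha^2-\frac{\kappa(2-2\kappa)}{\kappa+2}\alpha+\frac{2-2\kappa}{2+\kappa}$ is a convex parabola whose vertex sits precisely at $\alpha^*=\frac{1-\kappa}{2+\kappa}$ (the minimizer already recorded in the remark preceding the corollary). Using the vertex-value formula, the minimum equals $\frac{2-2\kappa}{2+\kappa}-\frac{\kappa(1-\kappa)^2}{(2+\kappa)^2}$; placing both terms over $(2+\kappa)^2$ and simplifying the numerator $(2-2\kappa)(2+\kappa)-\kappa(1-\kappa)^2=4-3\kappa-\kappa^3$ yields $\beta(\alpha^*)=\frac{4-\kappa^3-3\kappa}{(2+\kappa)^2}$, as claimed.

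The only genuine work is this last bookkeeping calculation, which is routine; there is no real obstacle, since all the analytical content is already carried by Theorem~\ref{thm:GD PL}, and the corollary amounts to substituting the vertex of the rate polynomial and checking that it is admissible.
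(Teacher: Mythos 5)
Your proof is correct and takes essentially the same route as the paper: the paper also obtains the corollary by specializing Theorem~\ref{thm:GD PL} at the vertex $\alpha^* = \frac{1-\kappa}{2+\kappa}$ of the convex parabola $\beta(\alpha)$, leaving the substitution implicit (``we immediately obtain''), and your algebra $\beta(\alpha^*)=\frac{4-\kappa^3-3\kappa}{(2+\kappa)^2}$ checks out. Your extra verifications---that $\alpha^*$ lies in the admissible interval since $1\le\sqrt{\kappa^2+2\kappa+4}$, and that $\eta\le L$ holds automatically for $L$-smooth P\L{} functions---are correct details the paper omits.
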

The optimal choice for the boosting parameter $\alpha^* = \frac{1-\kappa}{2+\kappa}$ corresponds to the step length
\[
\frac{1+\alpha^*}{L} = \frac{3}{(2+\kappa)L}.
\]
This leads to the convergence bound
\[
\frac{f(x^2)-f^*}{f(x^1)-f^*} \le \frac{4-\kappa^3-3\kappa}{(2+\kappa)^2}.
\]
This step length improves on the previous best known step length, given by $\min\left\{\frac{3}{2L}, \frac{2}{(1+\sqrt{\kappa})L}\right\}$,
 from \cite[Corollary 1]{abbaszadehpeivasti2022PL}
 for {$L$-smooth convex functions}.
 A direct comparison shows the new bound is tighter if and only if the condition number is in the range $0 < \kappa < 1/4$.

Figure \ref{fig:4} illustrates the bound from Corollary~\ref{cor:GD PL opt}.
The figure shows that the bound from Corollary~\ref{cor:GD PL opt} improves on the earlier bound from \cite[Theorem 3(ii)]{abbaszadehpeivasti2022PL} for the same step length.
To show that the new bound is still not tight, we include another (numerically computed) bound in the figure that uses the stronger characterization of smooth \L{}ojasiewicz functions from~\cite[Proposition 3.4]{rubbens2025constructive}.
We were unable, though, to obtain an analytical expression for the latter bound, and therefore omit further details here, especially since we believe the
numerically computed bound is still not tight.
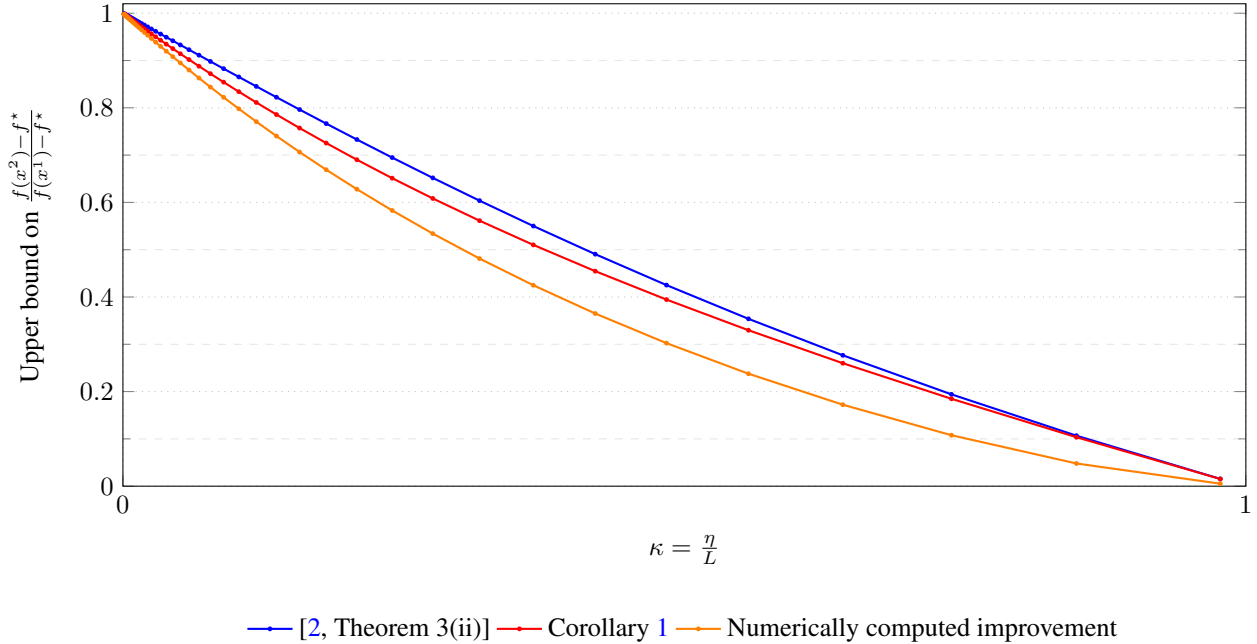
\begin{figure}[h!]
\centering
\begin{tikzpicture}
\begin{axis}[
    width=1\textwidth,
    height=8cm,
    ylabel={Upper bound on $\frac{f(x^2)-f^\star}{f(x^1)-f^\star}$},
    xlabel={$\kappa=\frac{\eta}{L}$},
    xmin=0., xmax=1,
    ymin=0, ymax=1.02,
    xtick={0,...,1.},
    yticklabel style={/pgf/number format/fixed, /pgf/number format/precision=2},
    grid=both,
    major grid style={dotted,gray!50},
    minor grid style={dashed,gray!20},
    minor x tick num=0,
    minor y tick num=1,
    legend style={
        at={(0.5,-0.25)}, 
        anchor=north,
        legend columns=3,
        draw=none,
        fill=none,
        cells={align=left}
    }
]

\addplot[
    thick, color=blue, mark=*, mark options={solid}, mark size=.5
] table[x=eta,y=GD_OLD_CONV,col sep=comma] {Data/GD_Loja/GD_Loja.csv};
\addlegendentry{\cite[Theorem 3(ii)]{abbaszadehpeivasti2022PL}}
\addplot[
    thick, color=red, mark=*, mark options={solid}, mark size=.5
] table[x=eta,y=GD_NEW_CONV,col sep=comma] {Data/GD_Loja/GD_Loja.csv};
\addlegendentry{Corollary~\ref{cor:GD PL opt}}
\addplot[
    thick, color=orange, mark=*, mark options={solid}, mark size=.5
] table[x=eta,y=GD_Numerics,col sep=comma] {Data/GD_Loja/GD_Loja.csv};
\addlegendentry{Numerically computed improvement}
\end{axis}
\end{tikzpicture}
\caption{\label{fig:4} Comparisons between the bounds from Corollary~\ref{cor:GD PL opt} on
 $\frac{f(x^2)-f^\star}{f(x^1)-f^\star}$ for gradient descent with step-size $\frac{3}{(2+\kappa ) L}$ for a range of $\kappa$.
 The `numerically computed improvement' uses the stronger characterization of
 smooth \L{}ojasiewicz functions from~\cite[Proposition 3.4]{rubbens2025constructive}.}
\end{figure}

\section{Conclusion and discussion}
\label{sec:conclusion}
In this paper we extended the SDP performance analysis for DCA by Abbaszadeh et al.\ \cite{abbaszad2024DCA} to the boosted DCA.
Having said that, our main result in Theorem \ref{thm:main} only covers the case where both $f_1,f_2 \in \mathcal{F}_{\mu,L}(\mathbb{R}^n)$
for given $0 < \mu < L$. It would be interesting to obtain more detailed results with $f_1 \in \mathcal{F}_{\mu_1,L_1}(\mathbb{R}^n)$
and $f_2 \in \mathcal{F}_{\mu_2,L_2}(\mathbb{R}^n)$ where $\mu_1 \ne \mu_2$, etc, and where (one of) $L_1$ and $L_2$ may be infinite.
(Such results were presented in \cite{abbaszad2024DCA}.)

In related work, Rotaru et al.\ \cite{rotaru2025tight} extended the  analysis in \cite{abbaszad2024DCA} to generalised DCA,
where one allows $\mu_2 < 0$,
as long as $\mu_1+\mu_2 \ge 0$. (It is known that this generalised DCA is still a descent method.)
In the same work, Rotaru et al.\ \cite{rotaru2025tight} also consider the implications for the special case of the proximal gradient method,
 and obtain some improved
worst-case convergence bounds. This is similar in spirit to the analysis in our paper, where we derived new results for  the special case of gradient descent (Theorem \ref{thm:GD PL}).
A natural question is to extend the analysis by Rotaru et al.\ \cite{rotaru2025tight} to the boosted case.

This work raises a few open questions:
\begin{enumerate}
  \item
  In Theorem \ref{thm:main} and Theorem \ref{thm:GD PL}, the performance measure is a convex function of the boosted step length $\alpha$.
  We also observe this numerically in Figure \ref{fig:3}, even though this is not covered by theory. A natural question is whether this is always the case.
  \item
  Condition~\eqref{PL constraint} was used to model the P{\L} inequality. This condition is known to
  be necessary, but not sufficient, to characterize the desired function class in the performance analysis framework.
   While Section~\ref{sec:GD} shows that adding appropriate points to the discrete description of the function at hand
   allows improving upon known convergence results, it remains clear that~\eqref{PL constraint} can also
    be refined. A natural (but complicated) candidate is provided in~\cite[Proposition 3.4]{rubbens2025constructive}
    and is shown numerically to allow improving even more the results from Theorem~\ref{thm:GD PL} and Corollary~\ref{cor:GD PL opt}
     (using the refined discrete description of the function); see Figure \ref{fig:4}. The results from Theorem~\ref{thm:GD PL} and Corollary~\ref{cor:GD PL opt} are therefore not tight in general, and it remains unclear how to obtain analytical expressions of tight bounds.
  \item As for now, it remains open to generalise the analysis to possibly include constraints,
  e.g., by allowing one of the Lipschitz constants to take the value $+\infty$. This is of practical interest, though, since the boosted DCA has been extended to allow linear constraints; see \cite{Boosted_DCA_linear_constraints,Zhang_et_al_2024_JOTA}. Any progress in this direction would be welcome.
  \item 
 { It would be desirable to derive a generalization of Theorem~\ref{thm:main} where  $f_1\in\mathcal{F}_{\mu_1, L_1}({\mathbb{R}^n})$ and $f_2\in\mathcal{F}_{\mu_2, L_2}({\mathbb{R}^n})$, and where one allows $\mu_1 \ne \mu_2$ and $L_1 \neq L_2$.
However, we could only establish Theorem \ref{thm:main} for the case where $\mu_1 = \mu_2 = \mu > 0$ and $L_1 = L_2 = L > \mu$. Theorem 3.1 in \cite{abbaszad2024DCA}, on the other hand, also establishes worst-case bounds for some cases of DCA where $\mu_1 \neq \mu_2$, $L_1 \neq L_2$, and where $L_1$ or $L_2$ may  not be finite, and $\mu_1$ or $\mu_2$ may be zero. Thus, the new results in this paper do not imply all the known results for DCA from \cite{abbaszad2024DCA} by setting $\alpha = 0$. 
\item 
In Example \ref{ex:tight} we showed that the upper bound in Theorem \ref{thm:main} can be tight for specific values of the parameters $\mu$, $L$, $\alpha$, and $N$.
We conjecture though, that the bound is in fact tight for all values of these parameters that meet the conditions in 
Theorem \ref{thm:main}. In fact, we believe that the optimal value of the SDP problem \eqref{PEP} is precisely the upper bound in Theorem \ref{thm:main} times $L$, when $\mu_1 = \mu_2 = \mu$, and $L_1 = L_2 = L$. This conjecture is based on extensive numerical experiments, as well as a general construction given in the appendix to this paper. This construction was used to generate Example \ref{ex:tight}, but we believe --- although we cannot prove this --- that it provides a tight bound for all choices of the parameters.
In particular, we conjecture that the general construction in the appendix gives a rank-one optimal solution to the SDP problem \eqref{PEP}. 
}
\end{enumerate}

\subsection*{Data availability statement}
There are no external data associated with this manuscript. The SDP performance problems we considered in this paper have been implemented in the Matlab software PESTO~\cite{pesto2017} and the Python version PEPit~\cite{pepit2024}. Codes allowing the reproduction of the main plots of this work can be found at
\begin{center}
\url{https://github.com/PerformanceEstimation/Boosted-DCA}
\end{center}

\subsection*{Compliance with Ethical Standards}
This research did not involve Human Participants and/or Animals, and there are no conflicts of interest to report.

\subsection*{Funding acknowledgement}
The work of the first two authors was supported by the
 Dutch Scientific Council (NWO)  grant \emph{Optimization for and with Machine Learning}, OCENW.GROOT.2019.015. A.~Taylor is supported by the European
Union (ERC grant CASPER 101162889). The French government also partly funded this work under the management
 of Agence Nationale de la Recherche as part of the “France 2030” program, reference ANR-23-IACL-0008 (PR[AI]RIE-PSAI). 
 Views and opinions expressed are however those of the authors only.

\section*{Appendix}
{
Here, we give a general construction that yields Example \ref{ex:tight} as the special case where $N=1$, $\mu =1$, $L =2$, and $\alpha =1$. After presenting this construction, we will give an indication of how it is derived.

Fix parameters
\[
0<\mu<L,\qquad N\in\mathbb{N},\qquad 0<\alpha\le \min\{1,2\mu/L\},\qquad \kappa:=\mu/L.
\]
Define
\[
D:=(1+\kappa\alpha)N+\frac{1}{2(1-\kappa)},\qquad
s:=\sqrt{\frac{L}{D}},\qquad
\delta:=\frac{s}{L},\qquad
q:=1+\kappa\alpha.
\]
Set
\[
x^1:=0,\qquad x^k:=-(k-1)(1+\alpha)\delta\quad (k=1,\dots,N+1),
\]
\[
y^k:=x^k-\delta,\qquad z^k:=x^k-\alpha\delta\quad (k=1,\dots,N),
\]
\[
x^\star:=x^{N+1}-\frac{s}{L-\mu}=x^{N+1}-\frac{\delta}{1-\kappa}.
\]

First define endpoint slopes:
\[
g_2(x^k):=-(k-1)qs\quad (k=1,\dots,N+1),
\qquad
g_2(x^\star):=g_2(x^{N+1})+\mu(x^\star-x^{N+1}).
\]
Define $g_2:\R\to\R$ by
\[
g_2(x):=
\begin{cases}
g_2(x^\star)+\mu(x-x^\star), & x\le x^\star,\\[1mm]
g_2(x^{N+1})+\mu(x-x^{N+1}), & x^\star\le x\le x^{N+1},\\[1mm]
g_2(x^{k+1})+L(x-x^{k+1}), & x^{k+1}\le x\le z^k,\ \ k=1,\dots,N,\\[1mm]
g_2(x^{k})+\mu(x-x^{k}), & z^k\le x\le x^k,\ \ k=1,\dots,N,\\[1mm]
Lx, & x\ge 0.
\end{cases}
\]
Now define $f_2$ by integrating $g_2$ from $x^\star$:
\[
f_2(x):=\int_{x^\star}^{x} g_2(t)\,dt,
\quad\text{so that } f_2(x^\star)=0.
\]

Prescribe
\[
g_1(x^k):=g_2(x^k)+s\quad (k=1,\dots,N+1),
\qquad
g_1(y^k):=g_2(x^k)\quad (k=1,\dots,N).
\]
Define $g_1:\R\to\R$ by
\[
g_1(x):=
\begin{cases}
g_2(x^\star)+\mu(x-x^\star), & x\le x^\star,\\[1mm]
g_1(x^{N+1})+L(x-x^{N+1}), & x^\star\le x\le x^{N+1},\\[1mm]
g_1(x^{k+1})+\mu(x-x^{k+1}), & x^{k+1}\le x\le y^k,\ \ k=1,\dots,N,\\[1mm]
g_1(y^{k})+L(x-y^{k}), & y^k\le x\le x^k,\ \ k=1,\dots,N,\\[1mm]
s+Lx, & x\ge 0.
\end{cases}
\]
Now define $f_1$ by integrating $g_1$ from $x^\star$:
\[
f_1(x):=\int_{x^\star}^{x} g_1(t)\,dt,
\quad\text{so that } f_1(x^\star)=0.
\]

\subsection*{Explanation of the construction}
The construction above is based on 
observations of which constraints in the SDP problem \eqref{PEP} are binding at optimality. One may then attempt to construct a solution to these equations.
Below we mention the relevant equalities.

Assume $f=f_1-f_2$, $f^\star=0$, $f_1,f_2\in\mathcal{F}_{\mu,L}(\mathbb{R}^n)$, $0<\mu<L<\infty$, and define
\[
s_k := g_1(x^k)-g_2(x^k).
\]

\subsection*{Algorithmic equalities (for $k=1,\dots,N$)}
\[
g_1(y^k)=g_2(x^k),
\qquad
x^{k+1}=y^k+\alpha\,(y^k-x^k).
\]

\subsection*{Interpolation equalities used in Table 1 (for $k=1,\dots,N$)}
\[
Q_1(x^k,y^k)=0,\quad Q_1(y^k,x^k)=0,\quad Q_1(x^{k+1},y^k)=0,\quad Q_1(y^k,x^{k+1})=0,
\]
\[
Q_2(x^{k+1},x^k)=0,
\]
where for $\ell\in\{1,2\}$,
\[
\begin{aligned}
Q_\ell(u,v):={}&
\frac{1}{2\left(1-\frac{\mu}{L}\right)}
\Bigg(
\frac{1}{L}\bigl\|g_\ell(u)-g_\ell(v)\bigr\|^2
+\mu\|u-v\|^2
-\frac{2\mu}{L}\left\langle g_\ell(v)-g_\ell(u),\,v-u\right\rangle
\Bigg)\\
&- f_{\ell}(u)+f_{\ell}(v)+\left\langle g_{\ell}(v),\, u-v\right\rangle .
\end{aligned}
\]

\subsection*{Vanishing squared-norm terms (for $k=1,\dots,N$)}
\[
g_1(x^k)-g_2(x^k)-L(x^k-y^k)=0,
\]
\[
-g_2(x^k)+g_2(x^{k+1})+(L+\alpha\mu)(x^k-y^k)=0,
\]
\[
\begin{aligned}
(\alpha  \mu -2 (\alpha -1) L)g_1(x^{k+1})
&+L(\alpha-2)g_2(x^k)+\alpha(L-\mu)g_2(x^{k+1})\\
&+\alpha  L (-\alpha  \mu +\mu +L)(x^k-y^k)=0.
\end{aligned}
\]

\subsection*{One-step inequality as equality (for $k=1,\dots,N$)}
\[
f(x^{k+1})+\frac{1}{2L}\|s_{k+1}\|^2
+\frac{1}{L}\left(\frac12+\alpha\frac{\mu}{L}\right)\|s_k\|^2
= f(x^k).
\]

\subsection*{Final descent-lemma equality at $x^{N+1}$}
\[
f(x^{N+1})=\frac{1}{2(L-\mu)}\|s_{N+1}\|^2.
\]

\subsection*{Initial gap}
\[
f(x^1)=1.
\]

}
\end{document}